\documentclass{article}[16pt]
\usepackage{amssymb}
\usepackage{amsthm}
\usepackage{amsmath}
\usepackage{tikz}
\usepackage{multicol}
\usepackage{enumitem}

\newtheorem{remark}{Remark}
\newtheorem{lemma}{Lemma}
\newtheorem*{claim}{Claim}
\newtheorem*{problem}{Problem}
\newtheorem{corollary}{Corollary}
\newtheorem{theorem}{Theorem}
\newtheorem*{thm}{Main Theorem}

\begin{document}
\title{There are level ternary circular square-free words of length $n$ for $n\ne 5,7,9,10,14,17.$}
\author{James D. Currie\\
Department of Mathematics \&
Statistics\\
The University of Winnipeg\thanks{The author is
supported by an NSERC Discovery Grant}\\
{\tt currie@uwinnipeg.ca}\vspace{.1in}\\
Jesse T. Johnson\\
Department of Mathematics \&
Statistics\\
University of Victoria\\
{\tt jessejoho@gmail.com}}
\maketitle \abstract{\noindent A word is level if each letter appears in it the same number of times, plus or minus 1. We give a complete characterization of the lengths for which level ternary circular square-free words exist. Key words: combinatorics on words, circular words, necklaces, square-free words, non-repetitive sequences}

\section{Introduction}
Combinatorics on words began with the work of Thue  \cite{thue06}, who showed that there are arbitrarily long square-free words over a three letter alphabet. Ternary square-free words remain an object of study, and progress has been made regarding their enumeration by length \cite{brandenburg,brinkhuis,ekhad,grimm,kolpakov,shur12} 
, the topology of infinite ternary square-free words   \cite{kobayashi,shelton,sheltonsoni}, and  their entropy \cite{baake}. Infinite square-free words do not exist for alphabets with fewer than three letters. One attempt to reduce this necessary alphabet size is to seek an infinite ternary square-free word which is `mostly' binary, i.e., such that  the frequency of one letter as small as possible. It has been shown that a lower bound on the minimum frequency is $883/3215$  \cite{khalyavin,tarannikov}. 

The results mentioned above are for linear words. Thue also studied circular words  \cite{thue12}, and completely characterized the circular overlap-free words on two letters. Surprisingly, it was not until 2002 that the first author \cite{currie} characterized the lengths for which ternary circular square-free words exist: 

\begin{theorem}\label{ternary}
For every positive integer $n$ other than 5, 7, 9, 10, 14, or 17, there is a 
ternary circular square-free word of length $n$.
\end{theorem}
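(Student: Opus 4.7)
The plan is to split the argument into a non-existence part for the six exceptional lengths and an existence part for all remaining $n$.

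For non-existence, I would verify directly that no circular square-free ternary word of length $n$ exists for each $n \in \{5,7,9,10,14,17\}$. Since a circular word of length $n$ is represented by any of its $n$ rotations, there are at most $3^n$ linear representatives to inspect. For each candidate $w$ one tests whether $ww$ contains a square factor of length at most $n$, and records a fail if every $w$ does. The cases $n=5,7,9,10$ can be done by hand (ternary square-freeness already forces severe restrictions on the letter distribution for such short lengths), while $n=14$ and $n=17$ are settled by a finite computer search over the roughly $3^n/n$ necklace classes.

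For existence up to a modest threshold $N_0$, I would exhibit explicit circular square-free words for every non-exceptional $n \leq N_0$. These are found by ad hoc construction (or short search) and serve as base cases.

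For $n > N_0$, the main strategy is a substitution/building-block argument. One chooses a small collection of square-free ternary factors $B_1,\dots,B_k$ whose lengths have greatest common divisor $1$, together with a compatibility rule on consecutive blocks (for instance, requiring the index sequence $i_1 i_2 \cdots i_m$ to be square-free over $\{1,\dots,k\}$, and imposing restrictions on which block may follow which). The blocks are designed so that any concatenation $B_{i_1}\cdots B_{i_m}$ respecting the rule is linearly square-free and, moreover, remains square-free when read circularly, i.e., no square can straddle a block junction or the wrap-around. Since $\gcd(|B_1|,\dots,|B_k|)=1$, every sufficiently large integer $n$ can be written as $\sum c_j |B_j|$ with $c_j \geq 0$; a valid index sequence of total length $\sum c_j$ is then supplied by a suitable factor of an infinite square-free word (for example by coding a Thue-type word on a larger alphabet down to $\{1,\dots,k\}$).

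The principal obstacle I expect is the design and certification of the building blocks: they must begin and end with prescribed letter patterns, avoid long shared prefixes/suffixes with one another, and leave no square factor at any junction or around the cyclic seam. Once suitable $B_j$ and a compatibility rule are identified, properties (i) linear square-freeness of block concatenations, (ii) absence of boundary squares, and (iii) absence of wrap-around squares reduce to finite case checks. Combined with the explicit base cases $n \leq N_0$ and the non-existence checks, this covers every positive integer $n$.
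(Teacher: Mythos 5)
First, a point of orientation: the paper does not prove Theorem~\ref{ternary} at all --- it is quoted from \cite{currie}, and the machinery developed here (Shur's conditions, the morphism $h$, the linking words $s$) is aimed at the \emph{level} refinement. So your attempt must be judged against the known proofs (\cite{currie}, \cite{shur10}), all of which use a \emph{uniform} substitution to realize lengths $Lm$ and then append a single linking segment of length $r$ to reach $Lm+r$ for each residue class, rather than mixing blocks of coprime lengths.

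Your large-$n$ step has a genuine gap. Writing $n=\sum_j c_j|B_j|$ by a Frobenius/gcd argument only fixes a multiset of blocks; you then need a \emph{circularly} square-free index word over $\{1,\dots,k\}$ containing the letter $j$ exactly $c_j$ times. A ``suitable factor of an infinite square-free word'' of length $\sum_j c_j$ has no reason to have those prescribed letter counts, and producing circular square-free words with prescribed letter multiplicities is essentially the problem this very paper is devoted to (already nontrivial in the balanced case, and impossible for very skewed multiplicities, since square-free words cannot be dominated by one or two letters). Without controlling the multiplicities you control only $\sum_j c_j$, not $\sum_j c_j|B_j|=n$. A second, smaller flaw: the claim that absence of wrap-around squares ``reduces to finite case checks'' is false as stated, because a square in a circular word may have period up to half the length of the entire word; ruling out long squares requires a synchronization argument pulling them back to a square, or to a forbidden configuration such as the $WxyW$ patterns of Lemma~\ref{pull back to U}, in the index sequence, together with a separate treatment of the seam --- this is precisely the content of Theorem~\ref{circular U} and the parity argument at its end. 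The standard repair for the main gap is to make all blocks the same length $L$ (so the index word can be \emph{any} circular square-free ternary word, with no multiplicity constraint, supplied by induction or by the $L=1$ base cases) and to absorb the residue $n \bmod L$ into one special linking word per residue, each verified separately; that is the route taken by the cited proofs and, for the level variant, by Theorem~\ref{linking words} of this paper.
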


  Several other proofs \cite{shur10,clokie,mol} of this theorem have now been given, signaling increasing interest in circular words. (See also \cite{fitz,mousavi}.)
Circular ternary square-free words are inherently harder to study than linear words; part of this is because the set of linear square-free words is closed under taking factors, while this is not true for the circular case. In the present paper, we consider a sort of opposite problem to that of finding a word with minimal frequency for some letter. Instead, we seek circular square-free words over $\{a,b,c\}$ where letter frequencies are as similar as possible: We seek ternary circular square-free words $w$ such that  for any letters $x,y\in\{a,b,c\}$,
 $$|w|_x-1\le|w|_y\le|w_x|+1.$$
 
 We call such words {\bf level}. We prove the following:

\begin{thm}
There is  a level ternary circular square-free word of length $n$,
for each positive integer $n$, $n\ne 5, 7, 9, 10, 14, 17$. \end{thm}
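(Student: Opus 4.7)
The plan is induction on $n$ with step size $3$, chosen because inserting a permutation of $\{a,b,c\}$ adds exactly one of each letter and thus automatically preserves levelness. The proof then reduces to: (i) verifying base cases for small $n$ in each residue class modulo $3$, and (ii) an extension lemma: every sufficiently long level ternary circular square-free word yields one of length three greater.

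For the base cases, I would explicitly exhibit level ternary circular square-free words of each length $n \in [1, N_0]$ with $n \notin \{5,7,9,10,14,17\}$, where $N_0$ is chosen large enough to seed the induction (a few multiples of $3$ beyond $17$ should suffice). These examples can be obtained by direct construction or computer search. The six excluded values require no construction at all: by Theorem~\ref{ternary}, no circular square-free word of those lengths exists, so in particular no level one does.

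For the extension lemma, the natural mechanism is insertion. Given a level circular square-free word $w$ of length $n$, consider the $6n$ candidate pairs $(i, \pi)$ where $i$ is a position in $w$ and $\pi$ is a permutation of $abc$. I would show that at least one such insertion produces a circular square-free word. The standard tactic is to enumerate the ``bad'' insertions (those creating a square) and bound their count. Short squares straddling the inserted block involve only a bounded neighbourhood of position $i$, contributing $O(1)$ bad pairs per position; long straddling squares are heavily constrained by the original circular square-freeness of $w$, and should contribute few pairs globally. A careful counting argument should yield strictly fewer than $6n$ bad pairs, leaving at least one good insertion.

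The main obstacle I expect is a clean proof of the extension lemma. The combinatorial case analysis for squares straddling the insertion point is intricate: a short square may arise from many local patterns of letters around $i$, and a long square imposes non-trivial constraints on distant letters of $w$. If the direct counting proves too coarse, an alternative is to prescribe the inserted permutation $\pi$ deterministically from the local pattern $w_{i-2}w_{i-1}w_iw_{i+1}$, specifically chosen to block the most common square configurations. Either way, the technical core of the argument lies in this local combinatorial analysis, and I would expect the write-up to organise it as a careful case split on the context surrounding the chosen insertion point.
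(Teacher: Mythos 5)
Your reduction of the problem to an ``insert a permutation of $abc$'' extension lemma is structurally sound as far as it goes --- insertion of one copy of each letter does preserve levelness exactly, the excluded lengths are correctly disposed of via Theorem~\ref{ternary}, and base cases up to a threshold past the gaps at $14$ and $17$ would seed the induction. But the extension lemma is not a technical detail to be deferred: it \emph{is} the theorem, and neither the counting argument nor the deterministic fallback you sketch is likely to close it. For the counting: at a given gap $i$, period-$1$ squares alone already forbid the $3$ permutations with $x=w_i$ or $z=w_{i+1}$, and period-$2$ and period-$3$ squares at the seam routinely eliminate several of the remaining $3$, so the ``$O(1)$ bad pairs per position'' for short squares is in fact of order $6$ per position before long squares are even considered; the claim that the grand total stays strictly below $6n$ needs a real argument, not an expectation. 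For long squares, a bad pair $(i,\pi)$ corresponds to a factor of $[w]$ of the form $\alpha\beta\,\alpha\sigma\beta$ with $\sigma$ a permutation of $abc$ (a ``near-square'' with a length-$3$ gap), and an arbitrary circular square-free word comes with no a priori bound on how many such configurations it contains; your deterministic choice of $\pi$ from the four letters around position $i$ cannot see these at all. No published proof of Theorem~\ref{ternary} proceeds by such an extension lemma, which is circumstantial but telling evidence that it is either false or genuinely hard.

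The paper sidesteps exactly this difficulty by never trying to extend an \emph{arbitrary} word. It builds words with rigid internal structure --- images of circular square-free words under a morphism $h$ into $\{1,2,3\}^*$, decoded through the Pansiot encoding subject to Shur's conditions (Theorem~\ref{circular U}) --- so that square-freeness of the result reduces, via synchronization properties of $h$ (Lemma~\ref{synchronization}), to square-freeness of the preimage. Levelness comes for free because each block decodes to a word with equal letter counts (Remark~\ref{concatenation}). This yields all lengths $18n$; the intermediate residues are then filled by explicitly computed linking words $s$ with $54\le|f(s)|\le 107$ (Theorem~\ref{linking words}, Table~\ref{stable}) and a finite search below length $90$. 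If you want to salvage your approach, you would need to either prove the extension lemma (and verify the threshold meshes with your base cases in each residue class modulo $3$), or restrict the induction to a structured family of words for which the straddling-square analysis is tractable --- which is essentially what the paper does.
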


This result is of interest for its own sake, but also as a tool. The lengths of the images of level words under morphisms are relatively simple to analyze. Thus a level circular square-free word of length $n$ may be useful to obtain a circular word of length $m$ avoiding  some pattern. For example, in  \cite{johnson}, level ternary circular square-free words are used to build binary circular words containing only three square factors, and having specified lengths.

It would be good to obtain similar results for ternary circular square-free words as have been proved for linear words. Thus, the results mentioned above motivate two natural open problems:

\begin{problem} For each positive integer $n$, how many circular square-free words over $\{a,b,c\}$ are there of length $n$?
\end{problem}

\begin{problem} What is the least possible frequency for a letter in a circular square-free word over $\{a,b,c\}$? This can be asked with the length of the word specified, or the limit infimum  can be studied as the length goes to infinity.
\end{problem}

\section{Preliminaries}

For general background on combinatorics on words, see the works of Lothaire   \cite{lothaire83,lothaire02}.
Let $\Sigma$ be a finite set. We refer to $\Sigma$ as an {\bf alphabet}, and its elements as {\bf letters}.  We denote by $\Sigma^*$ the free monoid over $\Sigma$, with identity $\epsilon$, the {\bf empty word}. We call the elements of $\Sigma^*$ {\bf words}. Informally, we think of the elements of $\Sigma^*$ as finite strings of letters, and of its binary operation as concatenation. Thus, if $u=u_1u_2\cdots u_n$, $u_i\in\Sigma$ and $v=v_1v_2\cdots v_m$, $v_j\in\Sigma$, then $uv=u_1u_2\cdots u_nv_1v_2\cdots v_m$. In this case, we say that $u$ is a {\bf prefix} of $uv$ and $v$ is a {\bf suffix}. More generally, if $w=uvz$, then $v$ is a {\bf factor} of $w$.   We call $v$ a {\bf proper factor} of $w$ in the case  $v\ne w$. We say that $v$ appears in $w$ at {\bf index} $i$ in the case where $|u|=i-1$.

Let $u=u_1u_2\cdots u_n$, $u_i\in\Sigma$. We say that $u$ has period $p>0$ if $u_i=u_{i+p}$ whenever $1\le i\le \le n-p$.
A word of the form $s=uu$, $u\ne \epsilon$ is called a {\bf square}. Thus a square $uu$ has period $|u|$. We write $u^2$ for $uu$, $u^3$ for $uuu$, etc. A word $w$ is said to be {\bf square-free} if no factor of $w$ is a square.

We will work in particular with the alphabets  $A=\{a,b,c\}$, $B=\{0,1\}$, and $S=\{1,2,3\}$. Words over $A$ are called {\bf ternary words} and words over $B$ are called {\bf binary words}.

 If $u=u_1u_2\cdots u_n$, $u_i\in\Sigma$, then the {\bf length} of $u$ is defined to be $n$, the number of letters in $u$, and we write $|u|=n$. The set of words of length $m$ over $\Sigma$ is denoted by $\Sigma^m$. We use $\Sigma^{\ge n}$ to denote the set of words over $\Sigma$ of length at least $n$. For $a\in\Sigma$, $u\in\Sigma^*$, we denote by $|u|_a$ the number of occurrences of $a$ in $u$. For $|u|\ge 1$, we use $u^-$ to denote the word obtained by deleting the last letter of $u$; thus $u^-=u_1u_2\cdots u_{n-1}.$ Similarly, if $|u|\ge 2$, then $u^{--}=u=u_1u_2\cdots u_{n-2}.$

If $w=uv$, then define $wv^{-1}=u$. Thus $vu=v(uv)v^{-1}$, and we refer to $vu$ as a {\bf conjugate} of $uv$. The relation `$a$ is a conjugate of $b$' is an equivalence relation on $\Sigma^*$, and we refer to the equivalence classes of $\Sigma^*$ under this equivalence relation as {\bf circular words}. If $w\in\Sigma^*$, we denote the circular word containing $w$ by $[w]$. We may consider the indices $i$ of the letters of a circular word $[u]=[u_1u_2\cdots u_n]$ to belong to ${\mathbb Z}_n$, the integers modulo $n$. Thus $u_{n+1}=u_1$, for example. 
If $[w]$ is a circular word and $v\in\Sigma^*$, we say that $v$ is a {\bf factor} of $[w]$ if $v$ is a factor of an element of $[w]$, i.e., if $v$ is a factor of a conjugate of $w$. A circular word $[w]$ is {\bf square-free} if no factor of $[w]$ is a square.

Let $\Sigma$ and $T$ be alphabets. A map $\mu:\Sigma^*\rightarrow T^*$ is called a {\bf morphism} if it is a monoid homomorphism, that is, if $\mu(uv)=\mu(u)\mu(v)$, for $u,v\in\Sigma^*$.

\section{Outline of proof} Using a result of Shur, we construct ternary circular square-free words via their Pansiot encodings. Our proof takes several steps:
\begin{enumerate}
\item Shur gave conditions on a circular word $[w[$ over $S^*$, such that $[f(w)]$ is the Pansiot encoding of a ternary circular square-free word, where $f(n)=01^n$.
\item\label{words w} We give a morphism $h:A^*\rightarrow S^*$, such that whenever $[v]$ is a circular square-free word over $A$, then $w=h(v)$ satisfies Shur's conditions. The ternary circular square-free word encoded by $w$ is level, and has length 18$|v|$. By Theorem~\ref{ternary}, we therefore can construct level ternary circular square-free words of every length of the form $18n$, $n\ne 5, 7, 9, 10, 14, 17$.
\item\label{words s} We give a condition on words $s$, such that for the words $w$ constructed in step \ref{words w}, $[ws]$ encodes a level ternary circular square-free word.
\item By computer search, we find words $s$ satisfying the condition of step \ref{words s}, having $52\le |s|\le 107.$ This implies that there are level ternary circular square-free words of length $18n+i$, $n\ne 5,7,9,10,14,17$, $52\le i\le 107$. This implies that there is a level
ternary circular square-free word for every length 70 or greater.
\item A final computer search for level
ternary circular square-free words of lengths 69 or less shows that there exists such a word, except for lengths 5,7,9,10,14, and 17.
\end{enumerate}

\section{Shur's conditions}
{\bf Pansiot encodings} were developed to solve Dejean's conjecture \cite{pansiot}. For our purposes, we do not need to consider the general case, and only consider encodings for words over $A=\{a,b,c\}$. Suppose that $v=v_1v_2v_3\cdots v_n$ is a word, $v_i\in A$, $1\le i\le n$, some $n\ge 2$, and $v$ contains no length 2 squares. It follows that given $v_{i}$ and $v_{i+1}$, there are only 2 choices for $v_{i+2}$, either $v_{i+2}=v_{i}$, or  $v_{i+2}$ is the unique element of $A-\{v_i,v_{i+1}\}$. Using this observation, we can encode $v$ by $v_1$, $v_2$, and the binary word $u=u_1u_2\cdots u_{n-2}$, where
\begin{eqnarray}\label{Pansiot}
u_i&=&\left\{
\begin{array}{ll}
0,&v_i=v_{i+2}\\
1,&v_i\ne v_{i+2}
\end{array}\right., 1\le i\le n-2.
\end{eqnarray}
For example, if $v=abcbacbcabcbacb$, then $u=1011101110111$. We call $u$ the {\bf Pansiot encoding} of $v$. By construction, a word can be recovered from its Pansiot encoding if we know its length 2 prefix. 
If two words $z$ and $w$ have the same Pansiot encoding, word $w$  is obtained from $z$ by the permutation of $A$ which maps the length 2 prefix of $z$ to the length 2 prefix of $w$. We call such words {\bf equivalent}. Thus $w=bcacbacabcacbac$ also has Pansiot encoding $u$ above, and is equivalent to $v$; it is obtained from $v$ by the permutation $a\rightarrow b$, $b\rightarrow c$, $c\rightarrow a$.  Write $\pi(v)$ for the Pansiot encoding of a word $v$, and given a binary word $u$, let $\Delta(u)$ be the word with prefix $ab$ and Pansiot encoding $u$. 

Consider the directed graph $D_1$ of Figure~\ref{pansiot pairs}. The vertices of the graph are the length 2 square-free words over $A$. The edges are from $xy$ to $yz$, where $x,y,z\in A$, and each edge is labeled by the Pansiot encoding of $xyz$. By induction, there is a walk from vertex $xy$ to vertex $zw$ labeled by $u$ exactly when there is a word $v$ with prefix $xy$, suffix $zw$, and $\pi(v)=u$. We conclude that the length 2 prefix and length 2 suffix of $v$ are identical if and only if $u=\pi(v)$ labels a closed walk on $D_1$. Note that if $u$ labels a closed walk on $D_1$, then each conjugate of $u$ labels the same closed walk, perhaps starting at a different vertex.

\begin{remark} Pansiot's exposition does not use $D_1$, but instead uses the group which has $D_1$ for its Cayley graph.
\end{remark}

\begin{figure}
\begin{center}
\begin{tikzpicture}[-,shorten >=1pt,auto,node distance=3cm,
  thick,main node/.style={circle,fill=gray!20,draw,font=\sffamily\Large\bfseries},minimum size=1.2cm]
  
  \node[main node] (1) {$ab$};
  \node[main node] (2) [below right of=1] {$bc$};
  \node[main node] (3) [right of=2] {$cb$};
  \node[main node] (4) [above right of=3] {$ba$};
  \node[main node] (5) [below left of=2] {$ca$};
  \node[main node] (6) [below right of=3]{$ac$};

    \draw [->] (1)  edge  node[left] {1} (2)
          edge node[above] {0} (4)
	  (2)  edge  node[above] {0} (3)
          edge node[above] {1} (5)
    (3)  edge  node[right] {1} (4)
        edge node[right] {} (2)
    (4)  edge  node[right] {1} (6)
        edge node[left] {} (1)
    (5)  edge  node[left] {1} (1)
        edge node[above] {0} (6)
    (6)  edge  node[right] {1} (3)
        edge node[] {} (5);
 
\end{tikzpicture}
\end{center}
\caption{\label{pansiot pairs} Graph $D_1$}
\end{figure}
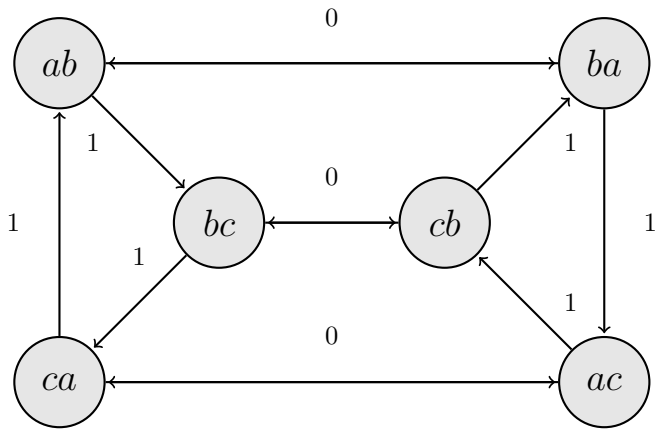

\begin{lemma}\label{Pansiot square}
Let $u\in B^*$. Then $\Delta(u)$ is a square if and only if $u$ can be written in the form $u=V\chi\upsilon V$, some $V\in B^*, \chi,\upsilon\in B$,   such that $V\chi\upsilon$ labels a closed walk on $D_1.$
\end{lemma}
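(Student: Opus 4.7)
The plan is to show that the Pansiot encoding transforms the "square" structure of $\Delta(u)$ into the decomposition $u = V\chi\upsilon V$ in a nearly mechanical way, with the two extra letters $\chi,\upsilon$ arising precisely from the two "boundary" positions where the encoding window $v_iv_{i+1}v_{i+2}$ straddles the midpoint of the square. The closed-walk condition will then come from the observation in the paper that walks on $D_1$ record exactly how length-2 prefixes evolve under $\pi$.

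For the forward direction, suppose $\Delta(u) = zz$ with $|z|=k$, so $|u|=2k-2$. Write $v=\Delta(u)=v_1v_2\cdots v_{2k}$ and let $V=u_1\cdots u_{k-2}$, $\chi=u_{k-1}$, $\upsilon=u_k$, and $V'=u_{k+1}\cdots u_{2k-2}$. For $1\le i\le k-2$, the letter $u_{k+i}$ is determined by $v_{k+i}v_{k+i+1}v_{k+i+2}$ via (\ref{Pansiot}); but $v=zz$ gives $v_{k+j}=v_j$ for $1\le j\le k$, so these three letters coincide with $v_iv_{i+1}v_{i+2}$ and thus $u_{k+i}=u_i$. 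Hence $V'=V$ and $u=V\chi\upsilon V$. Moreover the prefix $v_1\cdots v_{k+2}$ of $v$ has Pansiot encoding $u_1\cdots u_k = V\chi\upsilon$; since the length-2 prefix is $v_1v_2=ab$ and the length-2 suffix is $v_{k+1}v_{k+2}=v_1v_2=ab$, the paragraph preceding the lemma implies $V\chi\upsilon$ labels a closed walk on $D_1$.

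For the converse, suppose $u=V\chi\upsilon V$ with $V\chi\upsilon$ labelling a closed walk on $D_1$, and set $k=|V|+2$, $v=\Delta(u)$. Let $z=v_1\cdots v_k$. Then $v_1\cdots v_{k+2}$ has Pansiot encoding $V\chi\upsilon$; since $V\chi\upsilon$ labels a closed walk starting at the vertex $v_1v_2=ab$, it ends at $ab$ as well, so $v_{k+1}v_{k+2}=v_1v_2$. Now use induction on $i$: assuming $v_{k+j}=v_j$ for $j=1,\dots,i+1$, the relation (\ref{Pansiot}) determines $v_{k+i+2}$ from $v_{k+i}, v_{k+i+1}$, and $u_{k+i}$; but $u_{k+i}=u_i$ (since $u=V\chi\upsilon V$ forces $u_{k+i}=u_i$ for $1\le i\le k-2$) and the two preceding letters agree with $v_i,v_{i+1}$ by hypothesis, so $v_{k+i+2}=v_{i+2}$. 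Induction gives $v=zz$, so $\Delta(u)$ is a square.

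The main obstacle is purely bookkeeping: keeping the indices straight and justifying that the two "middle" letters $\chi,\upsilon$ of $u$ are exactly the encoding entries whose defining window sits at positions $k-1$ and $k$ (straddling the junction between the two copies of $z$), and that these are governed not by an equality between halves of $u$ but by the closed-walk condition. Once this indexing is pinned down, both directions reduce to the deterministic nature of the Pansiot recurrence together with the defining property of $D_1$.
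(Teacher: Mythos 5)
Your proof is correct and follows essentially the same route as the paper's: the forward direction transfers the period of $zz$ to $u$ and reads the closed-walk condition off the fact that $zab$ begins and ends with $ab$, while the converse uses the determinism of the Pansiot recurrence together with the closed walk forcing $\Delta(V\chi\upsilon)$ to end in $ab$. The only difference is presentational — you carry out the index bookkeeping and an explicit induction where the paper invokes periodicity and the equivalence of words sharing a Pansiot encoding.
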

\begin{proof} Suppose that $\Delta(u)=zz$, some $z\in A^+$. Since $zz$ has period $|z|$, so does $u=\pi(zz)$. Since $ab$ is a prefix of $z$, word $zz$ has prefix $zab$, which begins and ends with $ab$. It follows that prefix $q=\pi(zab)$ of $u$ labels a closed walk on $D_1$. Letting $V=\pi(z)$, we see that $u$ has the desired form.

In the other direction, suppose that $u$ can be written in the form $u=V\chi\upsilon V$, some $V\in B^*, x,y\in B$,   such that $V\chi\upsilon$ labels a closed walk on $D_1.$ The prefix and suffix of $\Delta(V\chi\upsilon V)$ which have length $|\Delta(V)|$ are equivalent words, since each has Pansiot encoding $V$. They will be identical if $\Delta(V\chi\upsilon)$ ends in $ab$. However, $V\chi\upsilon$ labels a closed walk in $D_1$, so that $\Delta(V\chi\upsilon)$ indeed ends in $ab$.
\end{proof}

\begin{remark}\label{conjugates}Suppose that $\Delta(u)=zz$. Since $u$ has period $|z|=|V\chi\upsilon|$, all length $|V\chi\upsilon|$ factors of $u$ are conjugates of $V\chi\upsilon$, and therefore also label closed walks.\end{remark}
 
For a word $v\in A^*$ to have a Pansiot encoding, $v$ can have no length 2 squares. Suppose that $v$ contains no squares of any length. This implies that the Pansiot encoding $u=\pi(v)$ obeys certain restrictions. For example, 00 cannot be a factor of $u$, or else $v$ contains a factor equivalent $\Delta(00)=abab$, which is a square. Similarly, 1111 cannot be a factor of $u$ since $\Delta(1111)=abcabc$ is a square.  It therefore follows that any factor of $u$ of the form $0w0$ can be written as $0w0=f(U)0$, $U\in S^*$, where
\begin{eqnarray*}
f(1)&=&01\\
f(2)&=&011\\
f(3)&=&0111.
\end{eqnarray*}

\begin{lemma}\label{U factor} Suppose that $U\in S^*$ has one of 
$11$, $222$, $223$, $322$, or $333$ as a proper factor.
Then $\Delta(f(U))$ contains a square. 
\end{lemma}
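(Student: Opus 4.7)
My plan is to proceed by cases on which forbidden block $T \in \{11, 222, 223, 322, 333\}$ occurs in $U$. In each case the aim is to exhibit inside $f(U)$ a factor of the shape $V\chi\upsilon V$ for which $V\chi\upsilon$ labels a closed walk on $D_1$; Lemma~\ref{Pansiot square}, applied to that factor (with the starting vertex inherited from the prefix of $f(U)$, and Remark~\ref{conjugates} invoked to handle the shift), will then produce a square in $\Delta(f(U))$.

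The observation that makes the cases tractable is that for every $s \in S$ the block $f(s)$ begins with $01$ and ends with $1$. So whenever $T$ is a proper factor of $U$, the image $f(U)$ contains $f(T)$ flanked by a trailing $1$ from the preceding letter-image and/or a leading $01$ from the following one. This contextual padding converts the near-periodic block $f(T)$ into a genuine $V\chi\upsilon V$. Concretely: for $T=11$, $f(T)\cdot 01 = 010101$ with $V=01$ and $V\chi\upsilon=0101$; for $T=222$, $1\cdot f(T) = 1011011011$ with $V=1011$ and $V\chi\upsilon=101101$, while $f(T)\cdot 0 = 0110110110$ gives the symmetric factor with $V\chi\upsilon=011011$; the cases $T=223$ and $T=322$ recycle the $222$-pattern, since $f(223)$ begins with $(011)^2$ and $f(322)$ ends with $(011)^2$, so one bit of context suffices; and for $T=333$, $1\cdot f(T)\cdot 0 = 10111011101110$ is itself already of the form $V\chi\upsilon V$ with $V=101110$ and $V\chi\upsilon=10111011$.

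For each of the short binary words $0101$, $011011$, $101101$, $10111011$, and so on, I would verify directly in $D_1$ that the walk closes, and ideally that it closes from every vertex, so that the induced permutation on the six vertices is the identity. These verifications are short traces of walks of length at most a dozen on a six-vertex graph. When $V\chi\upsilon$ closes from every vertex, the position of $V\chi\upsilon V$ inside $f(U)$ is immaterial, and Lemma~\ref{Pansiot square} delivers the desired square at once.

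The main obstacle I anticipate is boundary bookkeeping: the patterns above rely on context from a specific side, so when $T$ occurs as a prefix or suffix of $U$ only one side supplies the needed bit. In those subcases one must either identify an alternative $V\chi\upsilon V$ that sits inside the available portion of $f(U)$, or appeal to the particular starting vertex dictated by the prefix of $f(U)$ and check that $V\chi\upsilon$ closes from exactly that vertex rather than from all six. These are finite, mechanical checks in $D_1$, but they are where the real labor of the argument lives.
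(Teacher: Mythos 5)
Your overall strategy is the paper's strategy in thin disguise: locate the forbidden word, use the fact that every block $f(s)$ begins with $01$ and ends with $1$ to borrow context from an adjacent block, and exhibit an explicit square-producing factor. You route the last step through the $V\chi\upsilon V$ characterization of Lemma~\ref{Pansiot square} (plus Remark~\ref{conjugates}), whereas the paper simply computes $\Delta$ of the padded factor and observes a square; these are equivalent, and your individual decompositions ($010101$, $1011011011$, $0110110110$, $10111011101110$) are all correct where they apply.

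The genuine gap is precisely the ``boundary bookkeeping'' you defer in your last paragraph, and it cannot be repaired, because the one-sided checks you are counting on actually fail. Take $U=211$: then $11$ is a proper factor of $U$, but $f(211)=0110101$ and $\Delta(0110101)=abacbcacb$, which is square-free, so the asserted conclusion does not hold for this $U$. Your construction for $11$ consumes the leading $01$ of the \emph{next} block; when $11$ is a suffix of $U$ that block does not exist, and the left context alone gives only $10101$, with $\Delta(10101)=abcbabc$ square-free. The same failure occurs for $223$ occurring as a prefix (e.g.\ $\Delta(f(2231))$ is square-free) and for $322$ and $333$ occurring as suffixes (e.g.\ $\Delta(f(1322))$ and $\Delta(f(1333))$ are square-free): among the five forbidden words, only $222$ yields a square from each of the two one-sided extensions, and each of the other four needs context on one specific side. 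You should know that the paper's own proof has exactly the same defect --- it asserts that ``each right or left extension of these words leads to a square'' but verifies only the $222$ case, the unique case where that assertion is true --- and the lemma survives in the paper only because it is later applied in situations (circular words $[f(U)]$ in Theorem~\ref{circular U}, or an existential re-factorization of the encoding) where a following block is always available. So your instinct about where ``the real labor lives'' was right; the problem is that the labor cannot be completed for the statement as literally given, and an honest writeup must either add the hypothesis that the forbidden word occurs with a letter on the appropriate side (or on both sides, or circularly), or weaken the conclusion accordingly.
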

\begin{proof}One checks that each right or left extension of these words leads to a square in $\Delta(f(U))$. For example, if 222 is a proper factor of $U$, then  $U$ contains a word of one of the forms $x222$ and $222x$, where $x\in S$. In the first case, $f(U)$ contains a factor $1011011011$,  so that  $\Delta(f(U))$ contains a factor equivalent to $\Delta(1011011011)=abcbacabcbac$, which is a square; in the second case, $f(U)$ contains 0110110110, and $\Delta(f(U))$ contains a factor equivalent to the square $\Delta(0110110110)=abacbcabacbc$.
\end{proof}

Moving to the level of the Pansiot encoding, we therefore have the following:
\begin{lemma} If $u$ is the Pansiot encoding of a square-free word over $\{a,b,c\}$, then $u$ is a factor of a word $f(U)$, some word $U\in S^*$, such that $U$ does not contain a proper factor $11$, $222$, $223$, $322$ or $333$.
\end{lemma}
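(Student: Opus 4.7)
The plan is to read $U$ directly off the run-length structure of $u$ and then invoke the contrapositive of Lemma~\ref{U factor}.

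Since $v$ is square-free, we have already noted that $u$ avoids both $00$ and $1111$. It follows that every $0$ in $u$ is isolated and every maximal run of $1$'s in $u$ has length at most three. I can therefore write
\[
u \;=\; 1^{b_0}\,0\,1^{b_1}\,0\,1^{b_2}\cdots 0\,1^{b_m}
\]
with $b_0, b_m \in \{0,1,2,3\}$ and $b_1,\ldots,b_{m-1}\in\{1,2,3\}$ (the case $m = 0$ simply giving $u = 1^{b_0}$), and set $U = U_1 U_2 \cdots U_{m+1}\in S^*$ by $U_1 = \max(b_0,1)$, $U_i = b_{i-1}$ for $2 \le i \le m$, and $U_{m+1} = \max(b_m,1)$. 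A direct check shows that $f(U) = 0\,1^{U_1}\,0\,1^{U_2}\cdots 0\,1^{U_{m+1}}$ is obtained from $u$ by prepending at most two letters and appending at most one letter, so $u$ is indeed a factor of $f(U)$.

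It remains to verify that $U$ contains no proper factor equal to $11$, $222$, $223$, $322$, or $333$. Suppose for contradiction that it does, and call the offending pattern $P$. Then in $U$ there is an occurrence of $P$ bordered by at least one additional letter, and by the case analysis inside the proof of Lemma~\ref{U factor} this forces an explicit short binary factor of $f(U)$ (of length ten or twelve, e.g.\ $1011011011$ or $0110110110$ for $P = 222$) whose image under $\Delta$ is a square. Since the padding separating $u$ from $f(U)$ has total length at most three, while the square-producing factor has length at least ten and sits at the interior position of $f(U)$ straddling $P$ and its flanking letter, this factor lies entirely inside $u$. Consequently $\Delta(u)$ contains a factor equivalent to a square, so $v$ (which equals $\Delta(u)$ up to a fixed permutation of $A$) contains a square, contradicting square-freeness.

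The main obstacle is the bookkeeping in this last step: for each of the five forbidden patterns and each possible location of $P$ in $U$ relative to its flanking letters, including the boundary cases where $P$ involves $U_1$ or $U_{m+1}$, one must identify the explicit length-ten or length-twelve binary factor of $f(U)$ produced by Lemma~\ref{U factor} and confirm that it really does sit strictly between the prepended and appended paddings. The construction of $U$ chooses the padding as small as possible, so this reduces to a short, finite case analysis, after which the contradiction is immediate.
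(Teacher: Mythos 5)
Your overall strategy --- read $U$ off the run-length structure of $u$ with minimal padding, then rule out forbidden proper factors by pushing the square from Lemma~\ref{U factor} back inside $u$ --- is the natural one (the paper gives no proof at all, presenting the lemma as an immediate consequence of the discussion surrounding Lemma~\ref{U factor}), but as executed it has a genuine gap: the specific $U$ you construct can contain a forbidden proper factor even though $v$ is square-free, and in exactly those cases the square-witnessing factor does \emph{not} lie inside $u$. Concretely, take the square-free word $v=abacbcabacb$, so $u=\pi(v)=011011011$. Here $b_0=0$ and $b_1=b_2=b_3=2$, so your recipe gives $U_1=\max(0,1)=1$ and $U=1222$, which has $222$ as a proper factor. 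The witness for the extension $x222$ is the factor $1011011011$ of $f(U)=01011011011$, which begins at position $2$ --- the second letter of your prepended padding $01$ --- while $u$ occupies positions $3$ through $11$; indeed $\Delta(u)=abacbcabacb$ is square-free, so no contradiction can be extracted. The fault is in the construction, not the bookkeeping: the correct choice here is $U=222$, for which $222$ is a factor but not a \emph{proper} one. The right end fails the same way: for the square-free word $v=abacbcacb$ one gets $u=0110101$, your recipe gives $U=1211$ with the forbidden proper factor $11$ as a suffix, whereas $U=212$ satisfies the lemma.

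The second example also exposes a point where the contradiction you invoke is simply unavailable: a one-sided \emph{left} extension $x11$ does not force a square at all ($\Delta(f(211))=abacbcacb$ is square-free), so for a trailing $11$ in $U$ there is no ``explicit length-ten or length-twelve binary factor'' to locate, no matter how carefully one tracks positions. To repair the proof you must change the construction rather than refine the case analysis: when $b_0=0$ take no leading letter at all, so that $f(U)$ begins with the $0$ that begins $u$; and choose the final letter $U_{m+1}$ from $\{\max(b_m,1),\dots,3\}$ so as not to create any of $11$, $222$, $223$, $322$, $333$ at the right end (one must check such a choice always exists, using the fact that the interior letters are forced by $u$ and that $u$ itself avoids $00$ and $1111$). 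With the padding chosen that way, every forbidden proper factor of $U$ is witnessed entirely by letters of $u$ together with at most the single forced $0$ adjacent to them, and the argument you sketch can be completed.
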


\begin{remark}\label{D_2} Consider the directed graph $D_2$ of Figure~\ref{U-walks}. The vertices of the graph are again the length 2 square-free words over $A$, as in $D_1$. For each $\alpha\in S$ and for each vertex $xy$, there is an edge from $xy$ to $zw$, labeled by $\alpha$, exactly when $zw$ is the endpoint of the walk in $D_1$ labeled by $f(\alpha)$ starting at $xy$. Thus $U\in S^*$ labels a closed walk on $D_2$, exactly when $f(U)$ labels a closed walk on $D_1$. 

Note that $D_2$ is bipartite, with bipartition $V_1=\{ab,bc,ca\}$ and $V_2=\{ba,cb,ac\}$. Note also, that $D_2$ is highly symmetric, so that if there is a closed walk labeled by $U$ starting at one of the vertices, there is also a closed walk labeled by $U$ starting at each of the other vertices. 
\end{remark}

	Let $s=s_1s_2\cdots s_n$, $s_i\in  S$, $1\le i\le n$. Let $\omega(s)=\sum\limits_{i=0}^{|s|} (-1)^{i-1} s_i$, with each letter $s_i$ of $s$ considered as an integer. 

\begin{lemma}\label{omega}
	Let $s\in S^*$. The following are equivalent:
	\begin{enumerate}
	\item Length $|s|$ is even, and $\omega(s)\equiv 0$ (mod 3).
	\item Word $s$ is the sequence of edge labels of a closed walk on $D_2$.
\end{enumerate}
\end{lemma}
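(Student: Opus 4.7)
The plan is to give $D_2$ an explicit $\mathbb{Z}_3$-coordinate structure so that the alternating sum $\omega(s)$ becomes the total shift along a walk. Since $D_2$ is bipartite with parts $V_1=\{ab,bc,ca\}$ and $V_2=\{ba,cb,ac\}$, every closed walk automatically has even length, which supplies the parity half of statement (1). What remains is to track the $\mathbb{Z}_3$-valued position of the walk within each part.

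I would label $V_1$ by the cyclic rotations of $abc$, setting $ab\mapsto 0$, $bc\mapsto 1$, $ca\mapsto 2$, and label $V_2$ by cyclic rotations of $bac$, setting $ba\mapsto 0$, $cb\mapsto 1$, $ac\mapsto 2$. Using $f(1)=01$, $f(2)=011$, $f(3)=0111$ together with the edges of $D_1$, I would compute the eighteen edges of $D_2$ directly and verify the following uniform rule: an edge from $V_1$ labeled $\alpha$ sends a vertex of coordinate $k$ to the vertex of $V_2$ of coordinate $k-\alpha\pmod{3}$, and an edge from $V_2$ labeled $\alpha$ sends a vertex of coordinate $k$ to the vertex of $V_1$ of coordinate $k+\alpha\pmod{3}$. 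The high symmetry of $D_2$ noted in Remark~\ref{D_2} reduces this verification to only a handful of cases.

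Granted the rule, a walk on $D_2$ starting at $ab$ (coordinate $0$ of $V_1$) with label $s=s_1s_2\cdots s_n$ ends at coordinate
\[
-s_1+s_2-s_3+\cdots+(-1)^n s_n \;\equiv\; -\omega(s)\pmod{3},
\]
lying in $V_1$ when $n$ is even and in $V_2$ when $n$ is odd. The walk is closed iff it ends at $ab$, which requires both that $n$ be even and that $\omega(s)\equiv 0\pmod{3}$, giving the equivalence for walks based at $ab$. By the symmetry of $D_2$, the equivalence extends to walks based at any vertex. The only real obstacle is the (routine) case-check that establishes the coordinate rule; once it is in place the conclusion is a one-line alternating-sum computation.
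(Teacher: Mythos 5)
Your proposal is correct and follows essentially the same route as the paper: the paper also defines a $\mathbb{Z}_3$-valued function on the vertices (there $g(ab)=g(ba)=0$, $g(ca)=g(ac)=1$, $g(bc)=g(cb)=2$), verifies that an edge labeled $z$ shifts this value by $+z$ or $-z$ according to the bipartition class of its source, and concludes by induction that the endpoint differs from the start by $\pm\omega(s)$, using injectivity of the labeling on each part together with bipartiteness. Your choice of coordinates and the opposite sign convention are immaterial differences.
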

\begin{proof} Consider the function $g$ on the vertices of $D_2$ where $g(ab)=g(ba)=0$, $g(ca)=g(ac)=1$, $g(bc)=g(cb)=2$. One checks that if $x\in\{ab,bc,ca\}$, then if there is an edge from $x$ to $y$ labeled $z$, we have $g(y)\equiv g(x)+z$ (mod 3). On the other hand, 
if $x\in\{ba,cb,ac\}$, then if there is an edge from $x$ to $y$ labeled $z$, we have $g(y)\equiv g(x)-z$ (mod 3). By induction, we get the following:
\begin{claim}
If $s$ labels a walk starting at $x\in\{ab,bc,ca\}$, then the walk ends at a vertex $y$ with $g(y)\equiv g(x)+\omega(s)$ (mod 3).
If $s$ labels a walk starting at $x\in\{ba,cb,ac\}$, then the walk ends at a vertex $y$ with $g(y)\equiv g(x) -\omega(s)$ (mod 3).
\end{claim}
 Suppose $|s|$ is even, and $\omega(s)\equiv 0$ (mod 3). If $s$ labels a walk starting at $x\in\{ab,bc,ca\}$, then since $D_2$ is bipartite and $|s|$ is even,
the walk ends at a vertex $y\in\{ab,bc,ca\}.$ By the claim,
$g(y)\equiv g(x)+\omega(s)\equiv g(x)$ (mod 3).
 Since $g$ is 1-1 on $\{ab,bc,ca\}$, hence $x=y$ and the walk is closed. The proof is
  similar if $s$ labels a walk starting at $x\in\{ba,cb,ac\}$.
  
Suppose that $s$ is the sequence of edge labels of a closed walk on $D_2$. Since $D_2$ is bipartite, $|s|$ is even. Choose a vertex $x=y\in\{ab,bc,ca\}$ as the beginning and end of the walk. By the claim, $g(x)=g(y)\equiv g(x)+\omega(s)$ (mod 3). We conclude that $\omega(s)\equiv 0$ (mod 3).
  \end{proof}

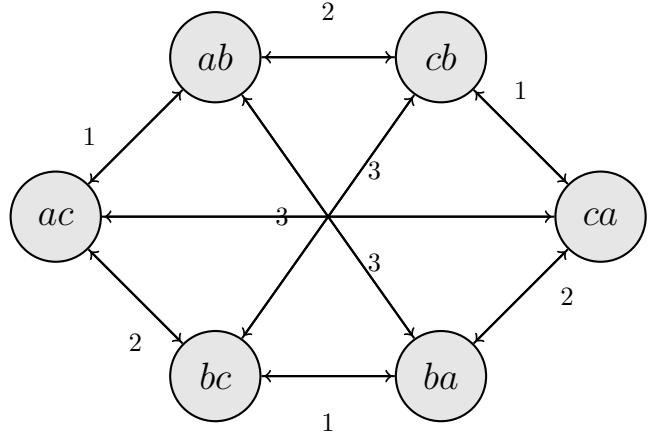
\begin{figure}
\begin{center}
\begin{tikzpicture}[-,shorten >=1pt,auto,node distance=3cm,
  thick,main node/.style={circle,fill=gray!20,draw,font=\sffamily\Large\bfseries},minimum size=1.2cm]
  
  \node[main node] (1) {$ab$};
  \node[main node] (2) [right of=1] {$cb$};
  \node[main node] (3) [below right of=2] {$ca$};
  \node[main node] (4) [below left of=3] {$ba$};
  \node[main node] (5) [left of=4] {$bc$};
  \node[main node] (6) [below left of=1]{$ac$};

    \draw [->] (1)  edge  node[above right] {3} (4)
          edge node[above] {2} (2)
          edge node[left] {1}(6)
	  (2)  edge  node[below right] {3} (5)
          edge node[] {} (3)
          edge node[] {} (1)
	  (3)  edge  node[left] {3} (6)
          edge node[right] {2} (4)
          edge node[above] {1} (2)
	  (4)  edge  node[] {} (1)
          edge node[] {} (5)
          edge node[] {} (3)
	  (5)  edge  node[right] {} (2)
          edge node[below] {2} (6)
          edge node[below] {1} (4)
	  (6)  edge  node[left] {} (1)
          edge node[above] {} (3)
          edge node[above] {} (5);
 
\end{tikzpicture}
\end{center}
\caption{\label{U-walks} Graph $D_2$}
\end{figure}

\begin{lemma}\label{pull back to U} Suppose that $U\in S^*$. Suppose that $\Delta(f(U))$ contains a square. Then  $U$ contains a proper factor $11$, $222$, $223$, $322$, or $333$, or $U$ contains a factor $WxyW$,
 some $W\in S^{\ge 2}, x,y\in S$,   such that $Wxy$ labels a closed walk on $D_2.$
\end{lemma}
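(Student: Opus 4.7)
The plan is to apply Lemma~\ref{Pansiot square} to the square factor of $\Delta(f(U))$ to obtain a factor $V\chi\upsilon V$ of $f(U)$, and then to analyze the alignment of this factor with the $f$-block decomposition of $f(U)$; under the assumption that $U$ has no forbidden proper factor, the pullback will produce the required $WxyW$.

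First, up to equivalence under permutations of $A$, the given square factor $zz$ of $\Delta(f(U))$ has $\pi(zz) = V\chi\upsilon V$ a factor of $f(U)$ (Lemma~\ref{Pansiot square}), with $V\chi\upsilon$ labelling a closed walk on $D_1$. Set $u' = V\chi\upsilon V$ and $p = |V\chi\upsilon| = |V|+2$. By Remark~\ref{conjugates}, every length-$p$ sub-factor of $u'$ labels a closed walk on $D_1$. For the small cases $|V|\le 3$, a direct enumeration of closed walks on $D_1$ of length at most $5$ shows that either $V\chi\upsilon V$ contains $00$ or $1111$ (so cannot be a factor of $f(U)$), or else $V\chi\upsilon \in \{0101, 1010\}$, placing $u'$ inside $f(111)$ or $f(1111)$ and forcing $11$ as a proper factor of $U$.

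For the main case $|V|\ge 4$, let $k \in \{0,1,2,3\}$ be the position of the first $0$ in $u'$. The corresponding position in $f(U)$ is a block boundary, and since $k \le |V|-1$, the period $u'[k] = u'[k+p] = 0$ makes $k+p$ another block boundary. Hence $u'[k..k+p-1] = f(W)$ for some $W = W_1 \cdots W_m \in S^*$. By Remark~\ref{conjugates} and Remark~\ref{D_2}, $W$ labels a closed walk on $D_2$, so by Lemma~\ref{omega} $m$ is even. The period also forces $u'[k+p..2|V|+1]$ to equal the length-$(|V|-k)$ prefix of $f(W)$; pulling back to $U$, the letters immediately following the occurrence of $W$ match the first $t$ letters of $W$, where $t$ counts the complete $f$-blocks of $W$ fitting into $|V|-k$. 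Writing $b_i = 1+W_i \in \{2,3,4\}$ and $e \in \{0,1,2,3\}$ for the size of the possible partial block straddling the boundary, one checks $\sum_{i>t} b_i = 2+k+e$; a finite enumeration yields $m-t \in \{1,2,3\}$, and $m-t=3$ only when the uncovered tail $(b_{t+1},b_{t+2},b_{t+3})$ is one of $(2,2,2), (3,2,2), (4,2,2)$.

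I would then conclude under the standing assumption that $U$ contains none of $11, 222, 223, 322, 333$ as a proper factor (so that $W$ also cannot contain any of these). The case $m=2$ is eliminated because Lemma~\ref{omega} would force $W \in \{22, 33\}$, and a brief parsing of $u'$ across the possible cyclic shifts of the block-aligned conjugate shows $U$ must then contain $222$ or $333$. The case $m-t=3$ is eliminated because each of the three possible tail block-patterns produces two consecutive letters $1$ in $W$. Thus $m \ge 4$ and $m-t \le 2$; setting $W' = W_1 \cdots W_{m-2}$, $x = W_{m-1}$, $y = W_m$, the factor $W'xyW'$ appears in $U$ with $W'xy = W$ labelling a closed walk on $D_2$ and $|W'| = m-2 \ge 2$, as required. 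The main obstacle is the case analysis, particularly verifying the parsing of $u'$ in the $m=2$ sub-case across all alignments, and confirming exhaustiveness of the block-size enumeration bounding $m-t$.
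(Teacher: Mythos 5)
Your proof is correct and follows essentially the same route as the paper: apply Lemma~\ref{Pansiot square}, dispose of short $V$ by enumerating closed walks on $D_1$, and in the main case pull the period-$p$ factor $V\chi\upsilon V$ back through the block structure of $f$ to exhibit a closed-walk word $W$ that is followed in $U$ by its own prefix --- your block-aligned window $f(W)$ is exactly $f(Z\alpha)$ in the paper's decomposition $V=1^rf(Z)01^s$, and your $m=2$ analysis reproduces the paper's cases for $011011$ and $01110111$. The only substantive difference is bookkeeping: your subcase $m-t=3$ is in fact vacuous, since the uncovered suffix of $f(W)$ is $\chi\upsilon 1^k$ and contains at most one $0$ (as $00$ is not a factor of $f(U)$), so it can meet at most two blocks; your elimination of that subcase via the forbidden proper factor $11$ is nevertheless valid, just redundant.
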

\begin{proof}
Suppose that $U$ doesn't contain a proper factor 11, 222, 223, 322, or 333. We wish to show that $U$ contains a factor $Wxy W$,
 some $W\in S^{\ge 2}, x,y\in S$,   such that $Wxy$ labels a closed walk on $D_2.$ 
 
 By Lemma~\ref{Pansiot square}, $f(U)$ contains a factor $u=V\chi\upsilon V$, some $V\in B^*, x,y\in B$,   such that $V\chi\upsilon$ labels a closed walk on $D_1.$ Since $V\chi\upsilon V$ is a factor of $f(U)$, neither of 00 and 1111 is a factor of $V\chi\upsilon V$.
 
Consider words of $B^*$, of length at most 8, labeling closed walks in $D_1$ and not containing a factor 00 or 1111. These are found to be 111, 0101, 011011, 01010101, 01110111  and their conjugates, and 01110. If $|V\chi\upsilon|\le 8$, then it must be one of these words.

\subsubsection*{Case 1: $|V\chi\upsilon|\le 8$.} 

\subsubsection*{Case 1a: A conjugate of $V\chi\upsilon$ is 111.}
 Here $V\chi\upsilon=111$, so that $V\chi\upsilon V=1111$, which is impossible. 
\subsubsection*{Case 1b: A conjugate of $V\chi\upsilon$ is 0101, or 01010101.}
If $V\chi\upsilon$ is a conjugate of 0101, then $V\chi\upsilon V$ is 010101 or 101010, and the factor 01010 in $f(U)$ implies that $U$ has proper factor 11, which is impossible. A factor 01010 in $f(U)$ also arises if $V\chi\upsilon$ is a conjugate of 01010101.
\subsubsection*{Case 1c: A conjugate of $V\chi\upsilon$ is 011011.} In this case, $V\chi\upsilon V$ is one of 0110110110, 1101101101, or 1011011011, forcing $U$ to contain one of 222, 322, or 223 as a proper factor, which is impossible.
\subsubsection*{Case 1d: A conjugate of $V\chi\upsilon$ is 01110111.} In this case, $V\chi\upsilon V$ is one of 01110111011101, 111011101110, 11011101110111, or 10111011101110, forcing $U$ to contain 333 as a proper factor, which is impossible.

\subsubsection*{Case 2: $|V\chi\upsilon|\ge 9.$} 

If $|V\chi\upsilon|\ge 9$, then $|V|\ge 7$. We claim that $|V|_0\ge 2$; otherwise, $|V|_0\le 1$. Since 1111 is not a factor of $f(U)$, this forces $V= 1110111$. However, now $Vx$ is a factor of $f(U)$, forcing $x=0$, and $yV$ is a factor of $f(U)$, forcing $y=0$. But then $xy=00$ is a factor of $f(U)$, which is impossible.

Since $|V|_0\ge 2$, write $V=1^rf(Z)01^s$, for non-negative integers $r$ and $s$, and some $Z\in S^+$. 
We have $V\chi\upsilon V=1^rf(Z)01^s\chi\upsilon 1^rf(Z)01^s$. Since $f(Z)01^s\chi\upsilon 1^rf(Z)0$ is a factor of $f(U)$, $U$ has factor $Z\alpha Z$, where $f(\alpha)=01^s\chi\upsilon 1^r$.  Recall that,  $V\chi\upsilon=1^rf(Z)01^s\chi\upsilon$ labels a closed walk on $D_1$.  It follows that its conjugate $f(Z)01^s\chi\upsilon1^r=f(Z\alpha)$ also labels a closed walk on $D_1$.
By Remark~\ref{D_2}, $Z\alpha$ labels a closed walk on $D_2$. 

At most one of $\chi$ and $\upsilon$ can be 0, since 00 is not a factor of $f(U)$. It follows that $1\le |\alpha|\le 2$. To summarize thus far: Word
$U$ has factor $Z\alpha Z$, word $Z\alpha$ labels a closed walk on $D_2$, and $|\alpha|\le 2$. If $|\alpha|= 2$, and $|Z|\ge 2$, let $Z=W$, $\alpha=xy$, and we are done.

If $|Z|\ge 3$, let $xy$ be the length 2 suffix of $Z\alpha$ and write $Z\alpha=Wxy$. Since $|\alpha|\le 2$, we see that $W$ is a prefix of $Z$, and $WxyW$ is a prefix of $Z\alpha Z$. Since $|W|=|Z|+|\alpha|-2\ge 3+1-2=2$, we are done.

Suppose then that $|Z|\le 2$. Since $Z\alpha$ is a closed walk, we deduce that $|Z\alpha|$ must be even, and $|Z\alpha|\le 4$. If $|Z\alpha|=4$, let $Wxy=Z\alpha$, and we are done, as in the case $|Z|\ge 3$. Therefore, 
suppose $|Z\alpha|=2$. We conclude that $Z\alpha$ must be one of the length 2 closed walks on $D_2$, namely 11, 22 and 33. Since $Z\in S^+$ and $1\le |\alpha|$, we have $|Z|=|\alpha|=1$ and $Z\alpha Z$ is one of 111, 222, 333, none of which is a factor of $U$. This is a contradiction.
\end{proof}

Suppose that $v=v_1v_2v_3\cdots v_n$ is a word, $v_i\in A$, $1\le i\le n$, some $n\ge 2$, and $[v]$ contains no length 2 squares.  Then $v_n\ne v_1$, and $v_1$ is either $v_{n-1}$ or the unique element of $A-\{v_{n-1}, v_n\}$.
Similarly, $v_2$ is either $v_{n}$ or the unique element of $A-\{v_n,v_1\}$. We may thus extend the notion of Pansiot encoding to the circular word $[v]$. The {\bf circular Pansiot encoding} of $[v]$ is the circular binary word $[u]$ where 

\begin{eqnarray}\label{circular Pansiot}
u_i&=&\left\{
\begin{array}{ll}
0,&v_i=v_{i+2}\\
1,&v_i\ne v_{i+2}
\end{array}\right., 1\le i\le n,
\end{eqnarray}

\noindent performing the arithmetic on the indices $i$ modulo $n$. For example, the encoding of $[abcacb]$ is $[110110]$. Here we are using $v_1v_2\cdots v_n=abcacb$, so that (\ref{circular Pansiot}) gives $u_1u_2\cdots u_n=110110$. Moving two letters from the end of $abcacb$ to the beginning gives a different representative of $[abcacb]$, with $v_1v_2\cdots v_n=cbabca$. In this case, (\ref{circular Pansiot}) gives $u_1u_2\cdots u_n=101101$, which is obtained by moving  two letters from the end of $110110$ to the beginning,  and is another representative of $[110110]$. Note that $\Delta(101101)=
abcbacab$. The length 6 prefix of this word is $abcbac$, which is equivalent to $cbabca$.
\begin{remark}\label{closed Pansiot}
If $u$ is a representative of a circular Pansiot word, then $\Delta(u)$ ends in $ab$, so that $u$ labels a closed walk on $D_1$. Any circular word with Pansiot encoding $u$ has the form $[v]$, where $v$ is equivalent to $\Delta(u)^{--}$. Each conjugate $v'$ of $v$ is equivalent to a word $\Delta(u')^{--}$, some conjugate $u'$ of $u$.
\end{remark}
\begin{theorem}
\label{circular U} \cite{shur10} 
Suppose that $U\in S^*$, and $U$ labels a closed walk on $D_2$. Suppose that $[U]$ contains no factor $11$, $222$, $223$, $322$, or $333$, and no factor $WxyW$, such that $W\in S^{\ge 2}, x,y\in S$, and $Wxy$ labels a closed walk on $D_2.$ Then $[f(U)]$ is the circular Pansiot encoding of a circular square-free word.
\end{theorem}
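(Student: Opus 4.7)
The plan is in two stages: first verify that $[f(U)]$ is a valid circular Pansiot encoding, so it decodes to some circular ternary word $[v]$; then show $[v]$ is square-free by transferring a hypothetical square down through the encoding to a forbidden factor of $[U]$, contradicting the hypothesis.

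For the first stage, Remark~\ref{D_2} gives that $f(U)$ labels a closed walk on $D_1$ (since $U$ labels one on $D_2$), so by Remark~\ref{closed Pansiot} the circular Pansiot decoding $[v]=[\Delta(f(U))^{--}]$ is well-defined, with no adjacent equal letters by construction of $\Delta$. For the second stage, suppose for contradiction that $[v]$ has a square factor $zz$, with $|z|\ge 2$. A conjugate $v'$ of $v$ contains $zz$, and by Remark~\ref{closed Pansiot} $v'$ is equivalent to $\Delta(u')^{--}$ for some conjugate $u'$ of $f(U)$. The standalone Pansiot encoding of $zz$ is a length-$(2|z|-2)$ factor of $u'$; by Lemma~\ref{Pansiot square} it takes the form $V\chi\upsilon V$ with $V\chi\upsilon$ labelling a closed walk on $D_1$. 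Since $u'$ is a length-$|f(U)|$ factor of the linear doubling $f(U)f(U)=f(UU)$, the word $V\chi\upsilon V$ is a factor of $f(UU)$, so $\Delta(f(UU))$ contains a square.

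Now apply Lemma~\ref{pull back to U} with $UU$ in place of $U$: either $UU$ contains a proper factor in $\{11,222,223,322,333\}$, or $UU$ contains a factor $WxyW$ with $|W|\ge 2$ and $Wxy$ labelling a closed walk on $D_2$. Any factor of $UU$ of length at most $|U|$ is a circular factor of $[U]$; the forbidden short factors all have length at most $3$, so for $|U|\ge 3$ they contradict the hypothesis directly.

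The main obstacle is the length bookkeeping in the $WxyW$ case, ensuring $|WxyW|\le|U|$ so that it is a genuine circular factor of $[U]$. Tracing the proof of Lemma~\ref{pull back to U}, the factor $WxyW$ sits inside a factor $Z\alpha Z$ of $UU$ such that $f(Z\alpha)$ is a conjugate of $V\chi\upsilon$; hence $|f(Z\alpha)|=(|V\chi\upsilon V|+2)/2\le|f(U)|/2$, and since each $|f(i)|=i+1\ge 2$, this yields $|Z\alpha|\le|U|$. When the naive bound $|Z\alpha Z|\le 2|Z\alpha|\le 2|U|$ still leaves $|Z\alpha Z|$ exceeding $|U|$, a periodicity argument---the two copies of $Z$ inside $UU$ force a length-$|Z\alpha|$ period on $U$ strong enough to extract a shorter $W'x'y'W'$ factor of $[U]$ with $|W'|=|Z\alpha|-2$ from $(Z\alpha)^2$---delivers the required contradiction. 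This periodicity step is the most delicate piece.
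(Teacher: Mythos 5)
Your overall strategy---linearize the circular word, apply Lemma~\ref{pull back to U}, and pull the resulting forbidden factor back into $[U]$---is the same as the paper's, but your choice of linearization opens a gap that you do not close. You embed the conjugate $u'$ of $f(U)$ into $f(UU)$, so the factor $WxyW$ produced by Lemma~\ref{pull back to U} is only known to lie in $UU$; to contradict the hypothesis it must be a factor of $[U]$, i.e.\ of a single conjugate of $U$, which requires $|WxyW|\le|U|$. Your bounds give only $|Z\alpha|\le|U|$, hence $|WxyW|$ can be as large as roughly $2|U|-2$, and the ``periodicity argument'' you invoke to repair this is not actually carried out: the word $W'x'y'W'$ you propose to extract from $(Z\alpha)^2$ has length $2|Z\alpha|-2$, which can still exceed $|U|$, so it is not automatically a factor of $[U]$. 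This is exactly the delicate point of the whole theorem, and asserting that a periodicity argument ``delivers the required contradiction'' does not discharge it. (There is also a minor unhandled edge case in the short-factor branch: for $|U|=2$, e.g.\ $U=22$, the word $UU=2222$ contains $222$ even though $[U]$ does not, so that branch needs a direct check.)

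The paper closes the gap with a parity observation that your proposal misses. A conjugate $u'$ of $f(U)$ is a factor not merely of $f(UU)$ but of $f(U_iU_{i+1}\cdots U_{i-1}U_i)$, the cyclic shift of $U$ extended by a \emph{single} letter, which has length $|U|+1$. Since $U$ labels a closed walk on the bipartite graph $D_2$, $|U|$ is even, so $|U|+1$ is odd, while $|WxyW|=2|W|+2$ is even; hence $WxyW$ cannot equal all of $U_iU_{i+1}\cdots U_{i-1}U_i$ and must be a proper factor of it. Every proper factor of a word of length $|U|+1$ lies in its length-$|U|$ prefix or its length-$|U|$ suffix, both of which are conjugates of $U$, hence factors of $[U]$---an immediate contradiction, with the same argument handling the short forbidden factors. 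Without this tighter linearization and parity step (or a genuine, written-out substitute for your periodicity claim), your proof is incomplete at its central step.
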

\begin{remark} We refer to the conditions on $U$ in this theorem as {\bf Shur's conditions}.
\end{remark}
\begin{proof} Suppose not. Write $U=U_1U_2\dots U_n$. The result certainly holds in the case $U=\epsilon$, so assume $n>0$. Since $U$ labels a closed walk in a bipartite graph, $n$ must be even, so that $n\ge 2$. Let $u $ be a conjugate of $f(U)$ such that
$\Delta(u)^{--}$ contains a square. Write $u=a_i''a_{i+1}a_{i+2}\cdots a_na_1\cdots a_{i-1}a_i'$ for some $i$, where $a_i=f(U_i)$, $1\le i\le n$, and $a_i=a_i'a_i''$, $a_i'\ne\epsilon$. 

Word $u$ is a factor of $f(U_iU_{i+1}\cdots U_{i-1}U_i)$. By Lemma~\ref{pull back to U}, $U_iU_{i+1}U_{i+2}$ $\cdots$ $ U_nU_1$ $\cdots$ $ U_{i-1}U_i$  has a proper factor 11, 222, 223, 322, or 333, or a factor $WxyW$, $W\in S^{\ge 2}, x,y\in S$, such that $Wxy$ labels a closed walk on $D_2.$ However, if $U_iU_{i+1}U_{i+2}$ $\cdots$ $ U_nU_1$ $\cdots$ $ U_{i-1}U_i$  has a proper factor 11, 222, 223, 322, or 333, then one of the factors $U_p=U_iU_{i+1}U_{i+2}$$\cdots$$ U_nU_1$$\cdots$$ U_{i-1}$ and $U_s=U_{i+1}U_{i+2}$$\cdots$$ U_nU_1$$\cdots$$ U_{i-1}U_i$ of $[U]$
contains a factor 11, 222, 223, 322, or 333, contrary to assumption. 

Again, because
$U_p$ and $U_s$ are factors of $[U]$, neither  
contains a factor $WxyW$, $W\in S^{\ge 2}, x,y\in S$, such that $Wxy$ labels a closed walk on $D_2.$ It follows that we can write
$$ U_iU_{i+1}U_{i+2}\cdots U_nU_1\cdots  U_{i-1}U_i=WxyW,$$ $W\in S^{\ge 2}, x,y\in S$, such that $Wxy$ labels a closed walk on $D_2.$

Recall that $|U|$ is even. Thus $|U_iU_{i+1}U_{i+2}\cdots U_nU_1\cdots  U_{i-1}U_i|=|U|+1$ is odd. However, $|WxyW|=2|W|+2$ is even. This is a contradiction.
\end{proof}
\section{The morphism $h$}

Consider the substitution $h:A^*\rightarrow S^*$ given by
\begin{eqnarray*}
h(a)&=&123123\\
h(b)&=&132132\\
h(c)&=&131313.
\end{eqnarray*}

For $x\in A$ we refer to the word $h(x)$ as a {\bf block}.

\begin{remark}\label{closed walk}
One checks that each block labels a closed walk on $D_2$. Thus, for any word $w\in A^*$, $h(w)$ also labels a closed walk on $D_2$. Also note that none of 11, 22, 33 is a factor of the circular word $[h(w)].$
\end{remark}

Call a non-empty word $q\in S^*$ {\bf ambiguous} if there exist square-free words $\alpha,\beta\in A^*$, and words $p_1,p_2,s_1,s_2\in S^*$, such  that 
$h(\alpha)=p_1q s_1$, $h(\beta)=p_2q s_2\in h(S^*)$, and $|p_1|\not\equiv
|p_2|$ (mod 6). An example of an ambiguous word is 1231, which is verified by letting $\alpha=a$, $\beta=ab$, $p_1=\epsilon$, $s_1=213$, $p_2=123$, $s_2=32132$. 

\begin{lemma}\label{ambig}
The only ambiguous words of even length are 12, 23, 31, 13, 32, 21, 1231, 3123, 1321, 3213, 3131, 1313, 313131, 131313. 
\end{lemma}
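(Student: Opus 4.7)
My plan is a finite case analysis exploiting the rigid structure of the three blocks $h(a)=123123$, $h(b)=132132$, $h(c)=131313$. Call $|p_1|\bmod 6$ the \emph{phase} of an occurrence $h(\alpha)=p_1 q s_1$, so that $q$ is ambiguous iff it occurs at two distinct phases $i,j\in\{0,1,\ldots,5\}$ in images of square-free words. Let $V_p$ denote the set of letters that can appear at block-position $p$ across the three blocks:
\[
V_0 = \{1\},\ V_1 = \{2,3\},\ V_2 = \{1,2,3\},\ V_3 = \{1,3\},\ V_4 = \{1,2,3\},\ V_5 = \{2,3\}.
\]
If $q$ has length $n$ and appears at phases $i$ and $j$, then $q_k\in V_{(i+k-1)\bmod 6}\cap V_{(j+k-1)\bmod 6}$ for each $1\le k\le n$; moreover, the letters under each alignment must be jointly consistent with a legal sequence of blocks in a square-free word, which is an additional and often decisive constraint.

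For the existence half, I would exhibit an explicit witness for each listed word. For example, $12$ appears at phases $0$ and $3$ in $h(a)=123123$ (take $\alpha=\beta=a$); $1231$ appears at phase $0$ in $h(a)$ and at phase $3$ in $h(ab)=123123\,132132$; the block $131313=h(c)$ appears at phase $0$ in $h(c)$ and at phase $2$ in $h(cb)=131313\,132132$; and $313131$ appears at phase $1$ in $h(cb)$ and at phase $5$ in $h(ac)=123123\,131313$. The remaining ten listed words admit analogous direct verifications.

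For the completeness half, I would classify by length. For each even $n\in\{2,4,6\}$ and each phase $i\in\{0,\ldots,5\}$, I tabulate all possible length-$n$ factors at phase $i$, using that each covered block is one of $h(a), h(b), h(c)$ and that consecutive blocks of $\alpha$ must differ. Cross-matching these six lists for each $n$ yields precisely the $6+6+2=14$ listed words; every other coincidence fails the $V_p$-intersection test above or forces $\alpha$ or $\beta$ to contain a repeated block such as $aa$, $bb$, or $cc$.

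The main obstacle is ruling out ambiguity for $n\ge 8$, which I plan to handle by a pair-by-pair analysis over the $15$ unordered phase pairs, organized by phase-difference $d\in\{1,2,3\}$. For $d=1$ the intersections $V_0\cap V_1$ and $V_5\cap V_0$ are empty, and once $n\ge 6$ the covered block-positions exhaust all six residues, yielding an immediate contradiction. For $d=2$ the singleton intersections at $V_p\cap V_{p+2}$ for $p\in\{0,1,3,4\}$ pin most letters of $q$ to a unique value, and combining these with the block-consistency constraint (already at length $8$) forces an inconsistency or a repeated block. The most delicate case is $d=3$: here the $V_p$-test alone is insufficient because the blocks $h(a)=(123)^2$ and $h(b)=(132)^2$ have internal period $3$, but once $n\ge 8$ the occurrence must extend past a full block in at least one alignment, and the only way to match the suffix of one block in $\beta$ with the prefix of the next forces $\alpha$ or $\beta$ to contain two consecutive identical blocks such as $aa$ or $bb$, contradicting square-freeness. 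Exhausting these fifteen cases completes the classification.
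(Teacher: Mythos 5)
Your proposal is correct in substance and would, if executed, establish the lemma; but it is organized quite differently from the paper's proof, which consists of a two-line reduction plus an appeal to exhaustive search. The paper's argument has exactly two ingredients: (i) an ambiguous word of length $8$ (resp.\ $\le 6$) is a factor of $h(\alpha)$ for some square-free $\alpha$ with $|\alpha|\le 3$ (resp.\ $|\alpha|\le 2$), so the search space is finite; and (ii) every factor of an ambiguous word is ambiguous, so the absence of length-$8$ ambiguous words kills all lengths $\ge 8$. You replace the brute-force search with a structured, hand-checkable case analysis by phase pairs, and your positional sets $V_p$ are computed correctly; the $d=1$ case is genuinely conceptual (empty intersections $V_0\cap V_1$ and $V_5\cap V_0$ plus coverage of all residues once $n\ge 6$), and I have checked that your $d=2$ and $d=3$ claims at length $8$ do close out as asserted: in each of the nine remaining phase pairs the singleton intersections together with the requirement that every block prefix begin with $1$ (and that length-$3$ and length-$4$ block prefixes/suffixes determine the block) force either a letter outside the allowed set or a repeated block $cc$, $aa$, or $bb$. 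What your approach buys is a proof a human can verify without a computer; what it costs is length, and the $d=2$, $d=3$ verifications are still asserted rather than displayed, so you have not actually eliminated the finite check, only reorganized it. One point you should make explicit rather than leave implicit in the parenthetical ``already at length $8$'': to pass from ``no ambiguous word of length exactly $8$'' to ``no ambiguous word of length $\ge 8$'' you need the observation (stated outright in the paper) that the length-$8$ prefix of a longer ambiguous word is itself ambiguous with the same phase pair; without that remark your $d=2$ and $d=3$ analyses only cover $n=8$. Your existence witnesses (e.g.\ $1231$ at phases $0$ and $3$, $313131$ at phases $1$ and $5$ in $h(cb)$ and $h(ac)$) are correct.
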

\begin{proof}
Since blocks have length 6, any ambiguous word of  length 8 is a factor of $h(\alpha)$, some square-free $\alpha\in A^*$, $|\alpha|\le 3$. An exhaustive search (manual or by computer) shows that there are no ambiguous words of length 8.
From the definition, any factor of an ambiguous word is ambiguous, so that there are no ambiguous words of length 8 or more.

Any ambiguous word of length 6 or less is a factor of $h(\alpha)$, some square-free $\alpha\in A^*$, $|\alpha|\le 2$. An exhaustive search produces the given list.
\end{proof}
\begin{lemma}\label{synchronization}
	The morphism $h$ has the following properties:
	\begin{enumerate}
		\item Let $x,y\in A$. Let $s\in S^2$. If $s$ is a suffix of both of $h(x)$ and $h(y)$, then $x=y$. Thus each letter $x\in A$ is determined by the length 2 suffix of $h(x)$.
		\item Let $x,y\in A$. Let $p\in S^3$. If $p$ is a prefix of both of $h(x)$ and $h(y)$, then $x=y$. Thus each letter $x\in A$ is determined by the length 3 prefix of $h(x)$.
	\end{enumerate}
\end{lemma}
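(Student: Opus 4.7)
The proof of Lemma~\ref{synchronization} is by direct inspection, since $A=\{a,b,c\}$ is finite and $h$ is explicitly defined on each letter. My plan is simply to tabulate the relevant length 2 suffixes and length 3 prefixes of $h(a)$, $h(b)$, $h(c)$ and observe that they are pairwise distinct.

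For part 1, I would list the length 2 suffixes of the three blocks: reading off from $h(a)=123123$, $h(b)=132132$, $h(c)=131313$, the suffixes are $23$, $32$, and $13$ respectively. Since these three words of $S^2$ are pairwise distinct, no $s\in S^2$ can be a suffix of two different blocks, which gives the claim.

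For part 2, I would do the analogous calculation for length 3 prefixes: the prefixes are $123$, $132$, and $131$ respectively, and again these are pairwise distinct, so no $p\in S^3$ can be a prefix of two different blocks.

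There is no real obstacle here; the whole content of the lemma is the observation that the author has chosen the three blocks so that their length 2 suffixes (and length 3 prefixes) are all different. The lemma will be used later as a synchronization tool to recover the preimage under $h$ from sufficient boundary information, which is why both a suffix and a prefix version are recorded.
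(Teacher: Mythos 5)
Your proposal is correct and matches the paper's proof exactly: both argue by direct inspection that the length 2 suffixes ($23$, $32$, $13$) and length 3 prefixes ($123$, $132$, $131$) of the three blocks are pairwise distinct.
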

\begin{proof}
	The proof is by inspection:
	\begin{enumerate}
		\item The length 2 suffix of $h(a)$ is 23, the length 2 suffix of $h(b)$ is 32, and the length 2 suffix of $h(c)$ is 13. Therefore, the length 2 suffixes of blocks are distinct.
		\item The length 3 prefix of $h(a)$ is 123, the length 3 prefix of $h(b)$ is 132, and the length 3 prefix of $h(c)$ is 131. Therefore, the length 3 prefixes of blocks are distinct.
	\end{enumerate}
\end{proof}

\begin{theorem}\label{basicSquareFree}
	Let $[v]$ be a circular square-free word over $A$. Let $w= h(v)$. Then $f(w)$ encodes a circular square-free word. 
\end{theorem}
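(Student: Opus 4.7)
The plan is to apply Theorem~\ref{circular U} to $w = h(v)$. It suffices to verify Shur's three conditions for $w$: (i) $w$ labels a closed walk on $D_2$; (ii) $[w]$ avoids each of $11, 222, 223, 322, 333$ as a factor; and (iii) $[w]$ contains no factor $WxyW$ with $|W| \geq 2$ and $Wxy$ labeling a closed walk on $D_2$. Condition (i) is immediate from Remark~\ref{closed walk}. Condition (ii) follows by inspection of the three blocks and their possible boundary transitions: since each block begins with $1$ and ends in $2$ or $3$, no $11, 22, 33$ appears at a junction, and consequently neither do the length-three forbidden factors (each of which requires two consecutive $2$s or $3$s).

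The substance of the proof is condition (iii). Suppose for contradiction that such a factor $WxyW$ occurs. By Lemma~\ref{omega}, $|Wxy|$ is even, so $|W|$ is even. The two copies of $W$ in $[w]$ are separated by $|W|+2$ positions, and since $|w| = 6|v|$, their positions modulo $6$ coincide precisely when $|W|+2 \equiv 0 \pmod 6$. This dichotomy drives the remainder of the proof.

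In the case $|W|+2 \equiv 0 \pmod 6$, write $|W|=6q+4$. The two copies of $W$ begin at the same offset $r \in \{0,1,\ldots,5\}$ modulo $6$ and therefore slice through $h(v)$ in identical block-patterns, each covering a suffix of some block, a run of full blocks, and a prefix of a final block. Using Lemma~\ref{synchronization} and direct inspection of the prefixes and suffixes of $h(a), h(b), h(c)$ of the relevant lengths (which in every case are long enough on at least one side to determine the block letter), matching the two copies of $W$ forces $v_j = v_{j+q+1}$ over $q+1$ consecutive indices $j$, producing a square of length at least $2$ in $[v]$ and contradicting square-freeness. The sub-case $q = 0$ is handled uniformly by the same argument.

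In the case $|W|+2 \not\equiv 0 \pmod 6$, the two copies of $W$ lie at distinct offsets modulo $6$, so $W$ is ambiguous. By Lemma~\ref{ambig} together with the evenness of $|W|$ and $|W| \not\equiv 4 \pmod 6$, we have $W \in \{12, 13, 21, 23, 31, 32\} \cup \{131313, 313131\}$. For each length-$6$ candidate, $\omega(Wxy) \equiv 0 \pmod 3$ together with $\omega(W) \equiv 0$ forces $x = y$, giving $11$, $22$, or $33$ as a factor of $WxyW$ and violating (ii). For the length-$2$ candidates, I tabulate the offsets modulo $6$ at which each length-$2$ factor appears in $[h(v)]$; four of the six ($W \in \{12, 21, 23, 32\}$) have offset sets containing no pair at distance $4 \pmod 6$ and are eliminated immediately. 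The remaining cases $W \in \{13, 31\}$ each admit only a handful of possible offset pairs, and for each such pair the first copy of $W$ pins down the block $v_{b_1}$, the letters $xy$ are then read off from the block content, and a quick check shows that the would-be second copy of $W$ either mismatches the block content, forces a square $v_{b_1} v_{b_1+1}$ in $[v]$, or requires $Wxy$ to fail the closed-walk condition. The main obstacle is organising this final case analysis for $W \in \{13, 31\}$ cleanly, but the offset tabulation reduces it to a short sequence of direct verifications.
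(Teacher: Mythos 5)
Your proposal is correct and follows essentially the same route as the paper: verify Shur's conditions via Theorem~\ref{circular U}, then split the $WxyW$ analysis into an ambiguous-word case (eliminated using the parity/$\omega$ constraints of Lemma~\ref{omega} together with inspection of the blocks) and a same-offset case (eliminated by using Lemma~\ref{synchronization} to pull a square back to $[v]$); your offset-mod-$6$ dichotomy is just a reorganization of the paper's ambiguous/non-ambiguous split, with the length-$4$ ambiguous words absorbed into the synchronization branch. One small citation slip: $xy\in\{22,33\}$ does not violate Shur's list of forbidden factors, but it is still excluded because $[h(v)]$ contains no factor $11$, $22$, or $33$ (Remark~\ref{closed walk}).
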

\begin{proof} 
One checks the result when $|v|=1$. Suppose then that $|v |\ge 2$.
It suffices to show that $[w]$ satisfies Shur's conditions. Certainly $w$ labels a closed walk, and 
no element of $\{11,222,223,322,333\}$ can appear as a factor of $[w]$ by Remark~\ref{closed walk}.	
	Suppose for the purpose of finding a contradiction that some conjugate $\hat{w}$ of $w$ contains a factor $VxyV$, where $x,y \in S$, $V\in S^{\ge 2}$, and $Vxy$ labels a closed walk on $D_2$.  
Write $v=v_1v_2\cdots v_n$, $v_i\in A$, $1\le i\le n$. Replacing $v$ by one of its conjugates if necessary, we suppose that $\hat{w}$ is a factor of $h(vv_1)$.

\noindent{\bf Claim.}
Word $vv_1$ is square-free.

\noindent{\bf Proof of claim.} The length $n$ prefix and suffix of $vv_1$ are square-free, since they are  factors of $[v]$. Thus if $vv_1$ contains a square $zz$, we must have $vv_1=zz$. However, then, since $v_1$ is both a prefix and suffix of $z$, we see that $vv_1$ contains the length 2 square $v_1v_1$ at index $|z|$. Since $n\ge 2$, this gives a square in $v$, which is impossible.

\noindent{\bf End proof of claim.}
\subsubsection*{Case 1: Word $V$ is ambiguous.}

Since $Vxy$ labels a closed walk on $D_2$, $|V|$ is even. It follows that $V$ is one of the words listed in Lemma~\ref{ambig}. Using Lemma~\ref{omega}, we find 
\begin{eqnarray*}
0&\equiv& \omega(Vxy)\\
&\equiv& \omega(V)+\omega(xy) \mbox{ (mod 3)},
\end{eqnarray*}
implying 	$\omega(xy)\equiv -\omega(V)$ (mod 3). Combined with the fact that $Vx$ and $yV$ must be factors of $h(\alpha)$ for some $\alpha$, we rapidly show each $V$ listed in Lemma~\ref{ambig} gives a contradiction, by considering the possibilities for $xy$. 

For example, suppose $V=1321$. In this case $\omega(V)=-1$, so we require $\omega(xy)\equiv 1$. The possible values for $xy$ with $\omega(xy)\equiv 1$ (mod 3) are $xy=$ 21, 32, or 13. Since $x=1$ implies 11 is a suffix of $Vx$, and $y=1$ implies 11 is a prefix of $yV$, the only remaining possibility is $xy=32$. Now, however, $VxyV=1321321321$, and 132132132 is not a factor of $h(\alpha)$ for any square-free $\alpha\in A^*$. We conclude that $V=1321$ is impossible.

In this way, all the possibilities can be ruled out. Alternatively, since we have $|VxyV|\le 14$ in each case, $VxyV$ would have to be a factor of $h(\alpha)$ for some square-free $\alpha$ of length at most 4. In addition, we require $\omega(Vxy)\equiv 0$ (mod 3). Each possibility for $V$, $x$, and $y$ can thus be ruled out by a computer search examining $h(\alpha)$ for each square-free $\alpha\in A^4$.
\subsubsection*{Case 2: Word $V$ is not ambiguous.}
Since $VxyV$ is a factor of $\hat{w}$, which is a factor of $h(vv_1)$, we see $V$ appears in $h(vv_1)$ at indices which differ by $|Vxy|$. Since $vv_1$ is square-free and $V$ is a non-empty word which is not ambiguous, it follows that $|Vxy|\equiv 0$ (mod 6).

Thus $h(vv_1)$ contains a factor $u$ with period $q$, $q\equiv 0$ (mod 6), $|u|=2q-2$. Let $u_1u_2\cdots u_m$ be a shortest factor of $vv_1$ such that $h(u_1u_2\cdots u_m)$ contains $u$. Write $h(u_i)=U_i$, $1\le i\le m$, and
$u=U_1''U_2\cdots U_{m-1}U_m'$, where $U_1=U'_1U_1''$, $U_m=U'_mU_m''$. Since $u_1u_2\cdots u_m$  is as short as possible, $U_1'',U_m'\ne\epsilon$. 

Let the prefix of $u$ of length $q$ be
$u_p=U_1''U_2\cdots U_j'$, where $U_j=U'_jU_j''$, $U_j'\ne\epsilon$.  
Since each $U_i$ is a block, of length 6, we have 
\begin{eqnarray*}
|U_j'U_j''|&\equiv&0\\
&\equiv&q\\
&\equiv&|U_1''U_2\cdots U_j'|\\
&\equiv&|U_1''U_j'|\mbox{ (mod 6)},
\end{eqnarray*}
so that $|U_1''|\equiv|U_j''|$ (mod 6). 

\subsubsection*{Case 2a: $|U_1''|=6$.} If $|U_1''|=6$, since $U_j'\ne\epsilon$, we have $|U_j''|=0$, and $u_p=U_1\cdots U_j$, $u=U_1\cdots U_jU_{j+1}\cdots U_{m-1}U_m'$. The length of $u$ is $2q-2\equiv 4$ (mod 6), forcing $|U_m'|=4$. Since $u$ has period $q=|U_1\cdots U_j|$, we find
\begin{eqnarray*}
U_i&=&U_{j+i}, 1\le i\le j-1\\
m&=&2j\\
\hat{U}_j&=&U_m',
\end{eqnarray*}
where $\hat{U}_j$ is the length 4 prefix of $U_j$. By Lemma~\ref{synchronization}, we find that $u_i=u_{j+i}, 1\le i\le j$, and $u_1\cdots u_m=(u_1\cdots u_j)^2$ is a square factor of $vv_1$. This is a contradiction.
\subsubsection*{Case 2b: $|U_1''|= 5$.}
In this case, $|U_1''|\equiv|U_j''|$ forces $|U_1''|=|U_j''|$. We therefore find $|U_j'|=1$. From $|u|=2q-2$, we find $|U_m'|=5$. The fact that $u$ has period $q$ forces
\begin{eqnarray*}
U_1''&=&U_j''\\
U_i&=&U_{j+i-1}, 2\le i\le j-2\\
m&=&2j-2\\
U'_{j-1}&=&U_m',
\end{eqnarray*}
where $U'_{j-1}$ is the prefix of $U_{j-1}$ of length 5.
By Lemma~\ref{synchronization}, we find that $u_i=u_{j+i-1}, 1\le i\le j-1$, and $u_1\cdots u_m=(u_1\cdots u_{j-1})^2$ is a square factor of $vv_1$. This is a contradiction.
\subsubsection*{Case 2c: $2\le|U_1''|\le 4$.}
In this case, $|U_1''|=|U_j''|$. The fact that $u$ has period $q$ forces
\begin{eqnarray*}
U_1''&=&U_j''\\
U_i&=&U_{j+i-1}, 2\le i\le j-1\\
m&=&2j-1.
\end{eqnarray*}
 By Lemma~\ref{synchronization}, we find that $u_i=u_{j+i-1}, 1\le i\le j-1$, and $u_1\cdots u_{m-1}=(u_1\cdots u_{j-1})^2$ is a square factor of $vv_1$. This is a contradiction.
\subsubsection*{Case 2d: $|U_1''|=1$.}
In this case, $|U_j'|=5$. The fact that $u$ has period $q$ forces
\begin{eqnarray*}
U_i&=&U_{j+i-1}, 2\le i\le j-1\\
m&=&2j-1\\
\hat{U}_j&=&U_m',
\end{eqnarray*}
where $\hat{U}_j$ is the length 3 prefix of $U_j$. By  Lemma~\ref{synchronization}, we find that $u_i=u_{j+i-1}, 2\le i\le j$, and $u_2\cdots u_m=(u_2\cdots u_j)^2$ is a square factor of $vv_1$. This is a contradiction.
\end{proof}

We wish to show that if $v\in A^*$, then $h(v)$ encodes a level word. 
\begin{remark}\label{concatenation}
Let $\mu,\nu\in B^*$. If $\Delta(\mu)$ ends in $ab$, then $\Delta(\mu\nu)=\Delta(\mu)^{--}\Delta(\nu).$ For each block $h(x)$, $\Delta(f(h(x)))$ ends in $ab$. (See Table~\ref{block decodings}.) It follows that if $a_i\in A$, $1\le i\le n$, then
\begin{eqnarray*}
&&\Delta(f(h(a_1a_2\cdots a_{n-1}a_n)))\\
&=&\Delta(f(h(a_1))f(h(a_2))\cdots f(h(a_{n-1}))f(h(a_n)))\\
&=&\Delta(f(h(a_1)))^{--} \Delta(f(h(a_2)))^{--}\cdots \Delta(f(h(a_{n-1})))^{--}\Delta(f(h(a_n)))
\end{eqnarray*}  
which corresponds to the circular word $$\Delta(f(h(a_1)))^{--} \Delta(f(h(a_2)))^{--}\cdots \Delta(f(h(a_{n-1})))^{--}\Delta(f(h(a_n)))^{--}.$$ From Table~\ref{block decodings}, we see that for each block $h(x)$, $\Delta(f(h(x)))^{--}$ contains each letter $a,b,c$ exactly 6 times.\end{remark} 

\begin{corollary}\label{basic_squarefree_level}
	Let $[v]$ be a circular square-free word over alphabet $A$. Let $w= h(v)$. Then $f(w)$ encodes a level ternary circular square-free word. In fact, $|\Delta(f(w))|_a=|\Delta(f(w))|_b=|\Delta(f(w))|_c$.
\end{corollary}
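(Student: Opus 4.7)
The plan is to combine Theorem~\ref{basicSquareFree} with the decomposition in Remark~\ref{concatenation}; essentially all the work has already been done, so the proof should be a short bookkeeping argument.

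First I would invoke Theorem~\ref{basicSquareFree} directly to conclude that $f(w)$ is the circular Pansiot encoding of a circular square-free word; this disposes of the square-free half of the claim. It then remains only to verify the letter-count equality, which immediately implies the level property (since if all three counts are equal, then $\bigl||w|_x - |w|_y\bigr| = 0 \le 1$ for any $x, y \in A$).

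For the counting, I would write $v = a_1 a_2 \cdots a_n$ with $a_i \in A$ and apply Remark~\ref{concatenation}. That remark shows that the circular word encoded by $f(w) = f(h(a_1 a_2 \cdots a_n))$ is represented by the linear word
\[
\Delta(f(h(a_1)))^{--}\,\Delta(f(h(a_2)))^{--}\cdots\Delta(f(h(a_n)))^{--}.
\]
Since the remark also states (via Table~\ref{block decodings}) that for every block $h(x)$ the word $\Delta(f(h(x)))^{--}$ contains each of $a$, $b$, $c$ exactly $6$ times, summing over $i = 1, \ldots, n$ gives
\[
|\Delta(f(w))|_a \;=\; |\Delta(f(w))|_b \;=\; |\Delta(f(w))|_c \;=\; 6n.
\]

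The only mild obstacle is ensuring that the concatenation identity in Remark~\ref{concatenation} genuinely applies at the circular level, i.e.\ that stripping the final $ab$ from the last block is what corresponds to passing from the linear representative $\Delta(f(w))$ to the circular word; but this is exactly what Remark~\ref{concatenation} asserts, and each of the $n$ blocks contributes identically, so no further care is needed. This yields the stronger equality stated, and in particular the word is level.
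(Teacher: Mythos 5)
Your proposal is correct and follows exactly the route the paper intends: Theorem~\ref{basicSquareFree} supplies square-freeness, and Remark~\ref{concatenation} together with Table~\ref{block decodings} shows each of the $n$ blocks contributes each letter exactly $6$ times to the circular word, giving equal counts $6n$ and hence levelness. The paper leaves this as an immediate consequence without writing it out, so your bookkeeping argument is precisely the omitted proof.
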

	
	\begin{table}[h!]
    \centering
    	\begin{tabular}{|c|c|c|}
    	    \hline
    	    $x$ & $h(x)$ & $\Delta(f(h(x)))$\\
    		\hline
    		$a$ & 123123 & abacabcbacbcacbabcab\\
    		$b$ & 132132 & abacabcacbabcbacbcab\\
    		$c$ & 131313 & abacabcacbcabcbabcab\\
    		\hline
    	\end{tabular}\\
    	\caption{The decodings of the images of blocks under $f$}
    	\label{block decodings}
    \end{table} 

Theorem~\ref{basicSquareFree} may be used to construct a square-free level ternary word of any length of the form $18n$, with $n\ne 5,7,9,10,14,17.$
\section{The words $s$}

\begin{theorem}\label{linking words}
	Let $v\in A^*$ be a word with prefix $a$ and suffix $b$, such that $[v]$ is a circular square-free word. Let $w= h(v)$. Suppose $s=33T22\in S^*$, and:
	
	\begin{enumerate}
		\item The word $s$ labels a closed walk on $D_2$;
	\item\label{no h}  Word $s$ has no factor $h(\mu)$, where $\mu\in A^{\ge 2}$, and $[\mu]$ 		is square-free;
		\item \label{Vxy} Word $2s1$ contains no factor $VxyV$,  $V\in S^{\ge 2}$, $x,y\in S$
		where $Vxy$ labels a closed walk on $D_2$;
		\item \label{1's} The word $T$ begins and ends with the letter 1;
		\item \label{length 2} The word $T$ contains no length 2 square;
		\item\label{prefix} Word $T$ has no prefix of the form $qh(u)13213$ or $1qh(u)13213$, where $u\in A^*$ and $q$ is a suffix of a block;
\item\label{suffix} Word $T$ has no suffix of the form $23123h(u)p$ or $23123h(u)p1$, where $u\in A^*$ and $p$ is a prefix of a block.
	\end{enumerate}
	Then $f(ws)$ encodes a circular square-free word.
\end{theorem}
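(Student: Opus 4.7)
The overall strategy is to verify the three Shur conditions on $[ws]$ and then invoke Theorem~\ref{circular U}. Condition~1, that $ws$ labels a closed walk on $D_2$, follows from Lemma~\ref{omega}: since $|w|=6|v|$ is even, one has $\omega(ws)=\omega(w)+\omega(s)\equiv 0\pmod 3$ by Remark~\ref{closed walk} and hypothesis~1, and $|ws|$ is even. Condition~2, that $[ws]$ contains no factor in $\{11,222,223,322,333\}$, is verified piece by piece: inside $w$ by Remark~\ref{closed walk}; inside $s$ by hypotheses~4 and~5, since $T$ has no length-$2$ square and the $33$ and $22$ caps of $s$ are buffered by the letter $1$; and at the two boundaries by direct inspection, using that $w$ ends with $h(b)=132132$ and begins with $h(a)=123123$ while $s$ begins with $33$ and ends with $22$.

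The substantive step is Shur's Condition~3. I would argue by contradiction: suppose $[ws]$ contains a factor $VxyV$ with $V\in S^{\ge 2}$, $x,y\in S$, and $Vxy$ labeling a closed walk on $D_2$. If $VxyV$ lies entirely within the $w$-portion of some rotation of $ws$, it is a linear factor of $w$ and hence a factor of $[w]$, contradicting Theorem~\ref{basicSquareFree}. If $VxyV$ lies in the $s$-portion extended by at most one letter on each side, then it is a factor of $2s1$, contradicting hypothesis~3. Thus $VxyV$ must straddle one of the two boundaries between $w$ and $s$ by at least two letters.

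For the straddling case, I would mirror the structure of the proof of Theorem~\ref{basicSquareFree} and split on whether $V$ is ambiguous in the sense of Lemma~\ref{ambig}. If $V$ is ambiguous, it lies in the short explicit list given there, and every choice of $(V,x,y)$ can be eliminated using the $\omega$-constraint $\omega(Vxy)\equiv 0\pmod 3$ together with the requirement that $Vx$ and $yV$ be consistent with the known structure of $[ws]$ near the relevant boundary --- a finite check (by hand or by computer, as in Theorem~\ref{basicSquareFree}). If $V$ is non-ambiguous, then, exactly as in Cases~2a--2d of Theorem~\ref{basicSquareFree}, two occurrences of $V$ inside a block-structured region must lie at positions congruent modulo $6$; applied to the $w$-side of the straddling occurrence, this forces the $w$-side copy of $V$ to have a block-aligned form $q\,h(u)\,r$ with $q$ a block suffix, $r$ a block prefix, and $u\in A^*$ square-free, and matching this by the other copy of $V$ forces $s$ to contain a factor of the same form at a specific location. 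Hypothesis~2 rules out an interior copy of $h(\mu)$ for square-free $\mu\in A^{\ge 2}$, while hypotheses~6 and~7 are tailored to forbid such patterns near the start and end of $T$, respectively: the specific strings $13213$ and $23123$ appearing in those hypotheses are precisely the length-$5$ prefix of $h(b)=132132$ and the length-$5$ suffix of $h(a)=123123$, which are the block-fragments one picks up when $V$ crosses the $w$-to-$s$ join or the $s$-to-$w$ seam.

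The main obstacle is this final straddling analysis: one must carefully enumerate the ways in which, for $V$ non-ambiguous and possibly long, the two copies of $V$ can align with the block structure of $w$ on one side and fit into $s$ on the other, across each of the two boundaries, and then check that every resulting pattern inside $s$ is ruled out by one of hypotheses~2, 6, or~7. The remaining subcases (ambiguous $V$, and factors contained in $w$ or in $2s1$) are either a finite case check or immediate.
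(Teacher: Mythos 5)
Your overall frame---verify Shur's conditions and invoke Theorem~\ref{circular U}, dispose of occurrences of $VxyV$ inside $w$ via Theorem~\ref{basicSquareFree} and inside $2s1$ via hypothesis~3, then analyze the straddling occurrences---matches the paper's. But the way you propose to handle the straddling case is not the paper's method, and it has a gap. You plan to reuse the ambiguous/non-ambiguous dichotomy from Theorem~\ref{basicSquareFree}: for non-ambiguous $V$, two occurrences of $V$ at distance $|Vxy|$ inside an $h$-image force $|Vxy|\equiv 0\pmod 6$. That deduction requires \emph{both} occurrences of $V$ to lie inside $h(\alpha)$ for a square-free $\alpha$; in the straddling case one copy of $V$ lies partly or wholly inside $s$, which is not an $h$-image (hypothesis~\ref{no h} explicitly prevents $s$ from containing long ones), so no congruence on positions follows and the block-alignment conclusion you want does not. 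Likewise, the finite check for ambiguous $V$ in Theorem~\ref{basicSquareFree} used that $Vx$ and $yV$ are factors of some $h(\alpha)$, which again fails when they reach into $s$.

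The idea your proposal is missing is the paper's Remark~\ref{repetition_restriction}: because $s=33T22$, $T$ begins and ends with $1$, and $w=h(v)$ contains no factor $33$ or $22$, the factor $33$ occurs in $[ws]$ exactly once (as the prefix of $s$) and $22$ exactly once (as the suffix). Since $VxyV$ cannot contain $33$ or $22$ inside a copy of $V$ (it would then occur twice), any nonempty prefix or suffix of $s$ appearing in $VxyV$ either has length $1$ or has one of its doubled letters sitting at the $xy$ position. This single observation collapses each boundary case to at most four subcases (e.g.\ $|s''|\in\{1,|V|+1,|V|+2,|V|+3\}$), each of which is then killed by Lemma~\ref{omega}, the explicit prefix $123123$ and suffix $132132$ of $w$, and hypotheses~\ref{1's}--\ref{suffix} (your identification of $13213$ and $23123$ as the relevant block fragments is correct, and this is exactly where hypotheses~\ref{prefix} and~\ref{suffix} enter). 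It also disposes immediately of the case $VxyV=w''sw'$, which your trichotomy does not cleanly cover: there both $33$ and $22$ appear in $VxyV$ but only one can overlap $xy$. Without this uniqueness observation, your ``careful enumeration'' has no a priori bound on how $V$ overlaps $s$, and the argument as proposed does not close.
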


\begin{proof}
	
	We show that $[ws]$ satisfies Shur's conditions. Both $w$ and $s$ label closed walks on $D_2$, so that $ws$ does also. By conditions \ref{1's} and \ref{length 2}, $[ws]$ has no factor from $\{11, 222, 223, 322, 333\}$. Suppose then, for the sake of getting a contradiction, that $[ws]$ contains a factor of the form $VxyV$ where $V\in S^{\ge 2}$, $x,y\in S$, and $Vxy$ labels  a closed walk on $D_2$.
	
By Theorem~\ref{basicSquareFree} and condition~\ref{Vxy}, $Vxy$ is not a factor of $w$ or of $2s1$.

 There are four possibilities:
	
	\begin{enumerate}[wide, labelwidth=!, labelindent=0pt]
		\item[Case 1: ] $VxyV=s''w'$, where $s''$ is a non-empty suffix of $s$, and $w'$ is a non-empty prefix of $w$.
		\item[Case 2: ] $VxyV=s''ws'$, where $s''$ is a non-empty suffix of $s$, and $s'$ is a non-empty prefix of $s$.
		\item[Case 3: ] $VxyV=w''s'$, where $w''$ is a non-empty suffix of $w$, and $s'$ is a non-empty prefix of $s$.
		\item[Case 4: ] $VxyV=w''sw'$, where $w''$ is a non-empty suffix of $w$, and $w'$ is a non-empty prefix of $w$.
	\end{enumerate}
	
	\begin{remark}\label{repetition_restriction}
		Note that $33$ appears in $[ws]$ only once, as a prefix of $s$. Similarly, $22$ appears in $[ws]$ only as a suffix of $s$. Therefore, in any of these cases, it is impossible for a length two prefix or suffix of $s$ to appear entirely inside $V$. Any non-empty prefix or non-empty suffix of $s$ appearing in $VxyV$ must either have length 1, or else one of its repeated letters must occur at $x$ or $y$.
	\end{remark}

\subsubsection*{Case 1: $VxyV=s''w'$, where $s''$ is a non-empty suffix of $s$, and $w'$ is a non-empty prefix of $w$.} 
Because of the restriction mentioned in Remark \ref{repetition_restriction} the possible subcases are:

	\begin{enumerate}[wide, labelwidth=!, labelindent=0pt]
	    \item[Case 1a:] $|s''|=1$.
	    \item[Case 1b:] $|s''|=|V|+1$.
	    \item[Case 1c:] $|s''|=|V|+2$.
	    \item[Case 1d:] $|s''|=|V|+3$.
	\end{enumerate}
	
	\noindent{\bf Case 1a: $|s''|=1$} 

\begin{tikzpicture}
\path[use as bounding box] (-5,1.5) rectangle (5,-2);
\draw (-4,0) -- (4,0); %horizontal midline
\draw (-4,0.6) -- (4,0.6); %horizontal top line
\draw (-4,-0.6) -- (4,-0.6); %horizontal bottom line

\draw (-2.5,0.6) -- (-2.5,-0.6);
\draw (-2.65,-0.4) node {2};
\draw (-2.2,0) -- (-2.2,-0.6);
\draw (-2.35,-0.4) node {2};
\draw (-1.9,0) -- (-1.9,-0.6);
\draw (-2.05,-0.4) node {1};
\draw (-1.6,0) -- (-1.6,-0.6);
\draw (-1.75,-0.4) node {2};
\draw (-1.3,0) -- (-1.3,-0.6);
\draw (-1.45,-0.4) node {3};

\draw (-2.8,0) -- (-2.8,-0.6);
\draw[dashed] (-2.2,-1.2) -- (-2.2,-0.6);

\draw (-3.3,-.9) node {$s$};
\draw (1.5,-.9) node {$w$};

\draw (2.5,0) -- (2.5,0.6);

\draw (-0.3,0) -- (-0.3,0.6);
\draw (-1.25,0.3) node {$V$};
\draw (-0.15,0.3) node {$x$};
\draw (1.25,0.3) node {$V$};

\draw (0,0) -- (0,0.6);
\draw (0.15,0.27) node {$y$};
\draw (0.3,0) -- (0.3,0.6);

\end{tikzpicture}

We have  $|w'|=|VxyV|-1$. Note that $2$ is the last letter of $w$. Therefore, if $|w'|<|w|$, then $VxyV=2w'$ is a factor of $[w]$ which is impossible. Thus $w'=w$, and $VxyV=2w$. However, $VxyV$ and $w$ both label closed walks, and therefore have even length, so that $2w$ has odd length. This is a contradiction.	
	
\noindent{\bf Case 1b: $|s''|=|V|+1$.}

\begin{tikzpicture}
\path[use as bounding box] (-5,1.5) rectangle (5,-2);
\draw (-4,0) -- (4,0); %horizontal midline
\draw (-4,0.6) -- (4,0.6); %horizontal top line
\draw (-4,-0.6) -- (4,-0.6); %horizontal bottom line

\draw (-0.6,0.) -- (-0.6,-0.6);
\draw (-0.3,0.6) -- (-0.3,-0.6);
\draw (-0.45,-0.4) node {2};
\draw (0,0.6) -- (0,-0.6);
\draw (-0.15,-0.4) node {2};
\draw (0.3,0) -- (0.3,-0.6);
\draw (0.15,-0.4) node {1};
\draw (0.6,0) -- (0.6,-0.6);
\draw (0.45,-0.4) node {2};
\draw (0.9,0) -- (0.9,-0.6);
\draw (0.75,-0.4) node {3};

\draw (-2.8,0) -- (-2.8,0.6);
\draw[dashed] (0,-1.2) -- (0,-0.6);

\draw (-1.1,-.9) node {$s$};
\draw (2.9,-.9) node {$w$};

\draw (2.5,0) -- (2.5,0.6);

\draw (-0.3,0) -- (-0.3,0.6);
\draw (-1.25,0.3) node {$V$};
\draw (-0.15,0.3) node {$x$};
\draw (1.25,0.3) node {$V$};

\draw (0,0) -- (0,0.6);
\draw (0.15,0.27) node {$y$};
\draw (0.3,0) -- (0.3,0.6);

\end{tikzpicture}

In this case, $s''=Vx$, $w'=yV$. Since, $Vxy$ labels a closed walk, $|V|$ must be even. Since $Vx=s''$ has suffix 22, the last letter of $V$ must be 2, and also $x=2$.

If $|V|=2$, then $|w'|=3$. The length $3$ prefix of $w$ is $123$, so that $V=23$, contradicting the fact that the last letter of $V$ must be 2. 

If $|V|=4$, then $|w'|=|yV|=5$, so that $w'$ is the length 5 prefix of $w$, which is 12312. This gives
$y=1$, $V=2312$. Then
 $Vxy=231221$. However, by Lemma~\ref{omega}, 231221 is not a closed walk. 
 
 If $|V|\ge  6$, then $yV=w'$ is of the form $123123h(u)p$, where $u\in A^*$ and $p$ is a prefix of a block, so that $V=23123h(u)p$. Since $x=2$, $p\ne\epsilon$. Then $V^-$ is of the form $23123h(u)p^-$, and is a suffix of $T$, contradicting Condition~\ref{suffix}.
	
\noindent{\bf Case 1c: $|s''|=|V|+2$.}

\begin{tikzpicture}
\path[use as bounding box] (-5,1.5) rectangle (5,-2);
\draw (-4,0) -- (4,0); %horizontal midline
\draw (-4,0.6) -- (4,0.6); %horizontal top line
\draw (-4,-0.6) -- (4,-0.6); %horizontal bottom line

\draw (-0.3,0) -- (-0.3,-0.6);
\draw (0,0.6) -- (0,-0.6);
\draw (-0.15,-0.4) node {2};
\draw (0.3,0) -- (0.3,-0.6);
\draw (0.15,-0.4) node {2};
\draw (0.6,0) -- (0.6,-0.6);
\draw (0.45,-0.4) node {1};
\draw (0.9,0) -- (0.9,-0.6);
\draw (0.75,-0.4) node {2};
\draw (1.2,0) -- (1.2,-0.6);
\draw (1.05,-0.4) node {3};

\draw (-2.8,0) -- (-2.8,0.6);
\draw[dashed] (0.3,-1.2) -- (0.3,-0.6);

\draw (-1.1,-.9) node {$s$};
\draw (2.9,-.9) node {$w$};

\draw (2.5,0) -- (2.5,0.6);

\draw (-0.3,0) -- (-0.3,0.6);
\draw (-1.25,0.3) node {$V$};
\draw (-0.15,0.3) node {$x$};
\draw (1.25,0.3) node {$V$};

\draw (0,0) -- (0,0.6);
\draw (0.15,0.27) node {$y$};
\draw (0.3,0) -- (0.3,0.6);

\end{tikzpicture}

In this case, $s''=Vxy$, $w'=V$. Thus  $xy=22$.
	Also, since $s=33T22$, $V$ is a suffix of $T$, and so must end with the letter 1, by Condition~\ref{1's}. Also $V$ is a prefix of $w$, which commences 123123. We conclude that  $|V|\neq 2$, because the length 2 prefix of $w$ is 12, which does not end in 1. Additionally, $|V|\neq 4$, as this would imply $V=1231$, and $Vxy=123122$ is not a closed walk by Lemma \ref{omega}. If $|V|\ge 6$, then $V=w'=123123h(u)p$ for some $u\in A^*$ and for some prefix of a block $p$. Then $123123h(u)p$ is a suffix of $T$, contradicting Condition~\ref{suffix} on $s$.
	
\noindent{\bf Case 1d: $|s''|=|V|+3$.}

\begin{tikzpicture}
\path[use as bounding box] (-5,1.5) rectangle (5,-2);
\draw (-4,0) -- (4,0); %horizontal midline
\draw (-4,0.6) -- (4,0.6); %horizontal top line
\draw (-4,-0.6) -- (4,-0.6); %horizontal bottom line

\draw (0,0) -- (0,-0.6);
\draw (0.3,0) -- (0.3,-0.6);
\draw (0.15,-0.4) node {2};
\draw (0.6,0) -- (0.6,-0.6);
\draw (0.45,-0.4) node {2};
\draw (0.9,0) -- (0.9,-0.6);
\draw (0.75,-0.4) node {1};
\draw (1.2,0) -- (1.2,-0.6);
\draw (1.05,-0.4) node {2};
\draw (1.5,0) -- (1.5,-0.6);
\draw (1.35,-0.4) node {3};
\draw (-2.8,0) -- (-2.8,0.6);
\draw[dashed] (0.6,-1.2) -- (0.6,-0.6);

\draw (-1.1,-.9) node {$s$};
\draw (2.9,-.9) node {$w$};

\draw (2.5,0) -- (2.5,0.6);

\draw (-0.3,0) -- (-0.3,0.6);
\draw (-1.25,0.3) node {$V$};
\draw (-0.15,0.3) node {$x$};
\draw (1.25,0.3) node {$V$};

\draw (0,0) -- (0,0.6);
\draw (0.15,0.27) node {$y$};
\draw (0.3,0) -- (0.3,0.6);

\end{tikzpicture}

In this case, $s''=Vxy2$, $2w'=V$. Thus  $xy=12$, since $s=33T22$, and $T$ ends in a 1.
Also $V$ is a prefix of $2w$, which commences 2123123.	
	 If $|V|=2$, then $w'=1$. Then $V=21$, implying that $T$ has $211$ as a suffix, contradicting Condition~\ref{length 2}. Suppose instead that $|V|=4$, so that $V=2123$. Therefore, $Vxy=212312$, which is not a closed walk by Lemma \ref{omega}. If $|V|= 6$, then $V=212312$, and $Vxy=21231212$, which is not a closed walk. If $|V|\ge 8$, then $V=2123123h(u)p$ for some $u\in A^*$ and for some non-empty prefix of a block $p$. Now $s''=Vxy2=V122$, implies that $Vx=2123123h(u)p1$ is a suffix of $T$, contradicting Condition~\ref{suffix} on $s$.
	
\subsubsection*{Case 2: $VxyV=s''ws'$, where $s''$ is a non-empty suffix of $s$, and $s'$ is a non-empty prefix of $s$.}  
	
Because of the restriction mentioned in Remark \ref{repetition_restriction} the possible subcases  are: 
	\begin{enumerate}[wide, labelwidth=!, labelindent=0pt]
	    \item[Case 2a:] 	    $|s'|=|s''|=1$. 
	    \item[Case 2b:] 	    $|s''|=1$, and $|s'|=|V|+1$.
	    \item[Case 2c:]      $|s''|=1$, and $|s'|=|V|+2$.
	    \item[Case 2d:] 	    $|s''|=1$, and $|s'|=|V|+3$.
	    \item[Case 2e:] 	    $|s''|=|V|+1$, and $|s'|=1$.
	    \item[Case 2f:] 	    $|s''|=|V|+1$, and $|s'|=1$.
	    \item[Case 2g:] 	    $|s''|=|V|+1$, and $|s'|=1$. 
	\end{enumerate}

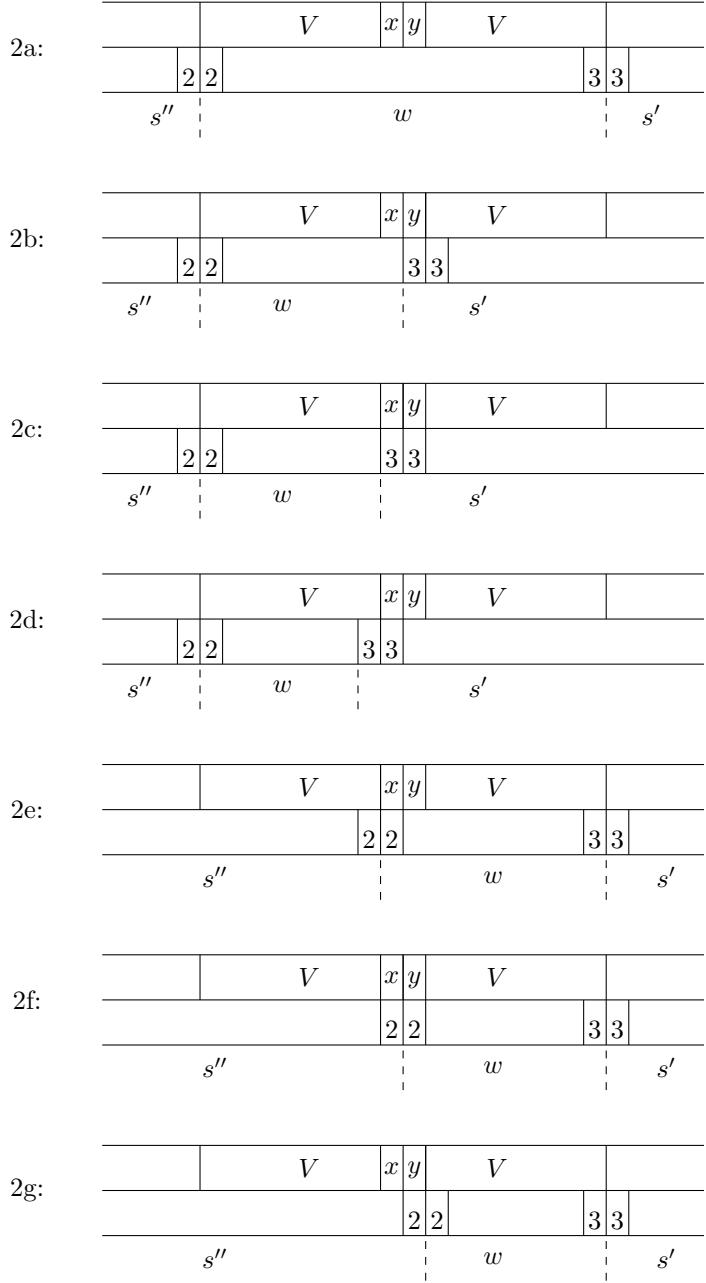
\begin{figure}
\begin{tikzpicture}
\path[use as bounding box] (-5,1) rectangle (5,-1.5);
\draw (-4,0) -- (4,0); %horizontal midline
\draw (-4,0.6) -- (4,0.6); %horizontal top line
\draw (-4,-0.6) -- (4,-0.6); %horizontal bottom line

\draw (-2.4,0) -- (-2.4,-0.6);
\draw (-2.55,-0.4) node {2};
\draw (-2.7,0) -- (-2.7,-0.6);
\draw (-2.85,-0.4) node {2};
\draw (-3,0) -- (-3,-0.6);

\draw (2.4,0) -- (2.4,-0.6);
\draw (2.55,-0.4) node {3};
\draw (2.7,0.6) -- (2.7,-0.6);
\draw (2.85,-0.4) node {3};
\draw (3,0) -- (3,-0.6);

\draw (-3.2,-.9) node {$s''$};
\draw[dashed] (-2.7,-1.2) -- (-2.7,-0.6);

\draw (0,-.9) node {$w$};
\draw[dashed] (2.7,-1.2) -- (2.7,-0.6);

\draw (3.3,-.9) node {$s'$};

\draw (-2.7,0) -- (-2.7,0.6);

\draw (-0.3,0) -- (-0.3,0.6);
\draw (-1.25,0.3) node {$V$};
\draw (-0.15,0.3) node {$x$};
\draw (1.25,0.3) node {$V$};

\draw (0,0) -- (0,0.6);
\draw (0.15,0.27) node {$y$};
\draw (0.3,0) -- (0.3,0.6);

\draw (-5,0) node {2a:};

\end{tikzpicture}
\begin{tikzpicture}
\path[use as bounding box] (-5,1) rectangle (5,-1.5);
\draw (-4,0) -- (4,0); %horizontal midline
\draw (-4,0.6) -- (4,0.6); %horizontal top line
\draw (-4,-0.6) -- (4,-0.6); %horizontal bottom line

\draw (-2.4,0) -- (-2.4,-0.6);
\draw (-2.55,-0.4) node {2};
\draw (-2.7,0) -- (-2.7,-0.6);
\draw (-2.85,-0.4) node {2};
\draw (-3,0) -- (-3,-0.6);

\draw (0,0) -- (0,-0.6);
\draw (.15,-0.4) node {3};
\draw (0.3,0.6) -- (0.3,-0.6);
\draw (.45,-0.4) node {3};
\draw (.6,0) -- (.6,-0.6);

\draw (-3.5,-.9) node {$s''$};
\draw[dashed] (-2.7,-1.2) -- (-2.7,-0.6);

\draw (-1.6,-.9) node {$w$};
\draw[dashed] (0,-1.2) -- (0,-0.6);

\draw (1,-.9) node {$s'$};

\draw (-2.7,0) -- (-2.7,0.6);

\draw (-0.3,0) -- (-0.3,0.6);
\draw (-1.25,0.3) node {$V$};
\draw (-0.15,0.3) node {$x$};
\draw (1.25,0.3) node {$V$};
\draw (2.7,0.6) -- (2.7,0);

\draw (0,0) -- (0,0.6);
\draw (0.15,0.27) node {$y$};
\draw (0.3,0) -- (0.3,0.6);
\draw (-5,0) node {2b:};

\end{tikzpicture}

\begin{tikzpicture}
\path[use as bounding box] (-5,1) rectangle (5,-1.5);
\draw (-4,0) -- (4,0); %horizontal midline
\draw (-4,0.6) -- (4,0.6); %horizontal top line
\draw (-4,-0.6) -- (4,-0.6); %horizontal bottom line

\draw (2.7,0.6) -- (2.7,0);

\draw (-2.4,0) -- (-2.4,-0.6);
\draw (-2.55,-0.4) node {2};
\draw (-2.7,0) -- (-2.7,-0.6);
\draw (-2.85,-0.4) node {2};
\draw (-3,0) -- (-3,-0.6);

\draw (-0.3,0) -- (-0.3,-0.6);
\draw (-.15,-0.4) node {3};
\draw (0,0.6) -- (0,-0.6);
\draw (.15,-0.4) node {3};
\draw (.3,0) -- (.3,-0.6);

\draw (-3.5,-.9) node {$s''$};
\draw[dashed] (-2.7,-1.2) -- (-2.7,-0.6);

\draw (-1.6,-.9) node {$w$};

\draw[dashed] (-0.3,-1.2) -- (-0.3,-0.6);
\draw (1,-.9) node {$s'$};

\draw (-2.7,0) -- (-2.7,0.6);

\draw (-0.3,0) -- (-0.3,0.6);
\draw (-1.25,0.3) node {$V$};
\draw (-0.15,0.3) node {$x$};
\draw (1.25,0.3) node {$V$};

\draw (0,0) -- (0,0.6);
\draw (0.15,0.27) node {$y$};
\draw (0.3,0) -- (0.3,0.6);
\draw (-5,0) node {2c:};

\end{tikzpicture}

\begin{tikzpicture}
\path[use as bounding box] (-5,1) rectangle (5,-1.5);
\draw (-4,0) -- (4,0); %horizontal midline
\draw (-4,0.6) -- (4,0.6); %horizontal top line
\draw (-4,-0.6) -- (4,-0.6); %horizontal bottom line

\draw (2.7,0.6) -- (2.7,0);

\draw (-2.4,0) -- (-2.4,-0.6);
\draw (-2.55,-0.4) node {2};
\draw (-2.7,0) -- (-2.7,-0.6);
\draw (-2.85,-0.4) node {2};
\draw (-3,0) -- (-3,-0.6);

\draw (-0.6,0) -- (-0.6,-0.6);
\draw (-.45,-0.4) node {3};
\draw (-0.3,0.6) -- (-0.3,-0.6);
\draw (-.15,-0.4) node {3};
\draw (0,0) -- (0,-0.6);

\draw (-3.5,-.9) node {$s''$};
\draw[dashed] (-2.7,-1.2) -- (-2.7,-0.6);

\draw (-1.6,-.9) node {$w$};

\draw[dashed] (-0.6,-1.2) -- (-0.6,-0.6);
\draw (1,-.9) node {$s'$};

\draw (-2.7,0) -- (-2.7,0.6);

\draw (-0.3,0) -- (-0.3,0.6);
\draw (-1.25,0.3) node {$V$};
\draw (-0.15,0.3) node {$x$};
\draw (1.25,0.3) node {$V$};

\draw (0,0) -- (0,0.6);
\draw (0.15,0.27) node {$y$};
\draw (0.3,0) -- (0.3,0.6);
\draw (-5,0) node {2d:};

\end{tikzpicture}

\begin{tikzpicture}
\path[use as bounding box] (-5,1) rectangle (5,-1.5);
\draw (-4,0) -- (4,0); %horizontal midline
\draw (-4,0.6) -- (4,0.6); %horizontal top line
\draw (-4,-0.6) -- (4,-0.6); %horizontal bottom line

\draw (2.4,0) -- (2.4,-0.6);
\draw (2.55,-0.4) node {3};
\draw (2.7,0.6) -- (2.7,-0.6);
\draw (2.85,-0.4) node {3};
\draw (3,0) -- (3,-0.6);

\draw (-0.6,0) -- (-0.6,-0.6);
\draw (-.45,-0.4) node {2};
\draw (-0.3,0.6) -- (-0.3,-0.6);
\draw (-.15,-0.4) node {2};
\draw (0,0) -- (0,-0.6);

\draw (-2.5,-.9) node {$s''$};
\draw[dashed] (-0.3,-1.2) -- (-0.3,-0.6);

\draw (1.2,-.9) node {$w$};

\draw[dashed] (2.7,-1.2) -- (2.7,-0.6);
\draw (3.5,-.9) node {$s'$};

\draw (-2.7,0) -- (-2.7,0.6);
/
\draw (-0.3,0) -- (-0.3,0.6);
\draw (-1.25,0.3) node {$V$};
\draw (-0.15,0.3) node {$x$};
\draw (1.25,0.3) node {$V$};

\draw (0,0) -- (0,0.6);
\draw (0.15,0.27) node {$y$};
\draw (0.3,0) -- (0.3,0.6);
\draw (-5,0) node {2e:};
\end{tikzpicture}

\begin{tikzpicture}
\path[use as bounding box] (-5,1) rectangle (5,-1.5);
\draw (-4,0) -- (4,0); %horizontal midline
\draw (-4,0.6) -- (4,0.6); %horizontal top line
\draw (-4,-0.6) -- (4,-0.6); %horizontal bottom line

\draw (2.4,0) -- (2.4,-0.6);
\draw (2.55,-0.4) node {3};
\draw (2.7,0.6) -- (2.7,-0.6);
\draw (2.85,-0.4) node {3};
\draw (3,0) -- (3,-0.6);

\draw (-0.3,0) -- (-0.3,-0.6);
\draw (-.15,-0.4) node {2};
\draw (0,0.6) -- (0,-0.6);
\draw (.15,-0.4) node {2};
\draw (0.3,0) -- (0.3,-0.6);

\draw (-2.5,-.9) node {$s''$};
\draw[dashed] (0,-1.2) -- (0,-0.6);

\draw (1.2,-.9) node {$w$};

\draw[dashed] (2.7,-1.2) -- (2.7,-0.6);
\draw (3.5,-.9) node {$s'$};

\draw (-2.7,0) -- (-2.7,0.6);

\draw (-0.3,0) -- (-0.3,0.6);
\draw (-1.25,0.3) node {$V$};
\draw (-0.15,0.3) node {$x$};
\draw (1.25,0.3) node {$V$};

\draw (0,0) -- (0,0.6);
\draw (0.15,0.27) node {$y$};
\draw (0.3,0) -- (0.3,0.6);
\draw (-5,0) node {2f:};
\end{tikzpicture}

\begin{tikzpicture}
\path[use as bounding box] (-5,1) rectangle (5,-1.5);
\draw (-4,0) -- (4,0); %horizontal midline
\draw (-4,0.6) -- (4,0.6); %horizontal top line
\draw (-4,-0.6) -- (4,-0.6); %horizontal bottom line

\draw (2.4,0) -- (2.4,-0.6);
\draw (2.55,-0.4) node {3};
\draw (2.7,0.6) -- (2.7,-0.6);
\draw (2.85,-0.4) node {3};
\draw (3,0) -- (3,-0.6);

\draw (0,0) -- (0,-0.6);
\draw (.15,-0.4) node {2};
\draw (0.3,0.6) -- (0.3,-0.6);
\draw (.45,-0.4) node {2};
\draw (0.6,0) -- (0.6,-0.6);

\draw (-2.5,-.9) node {$s''$};
\draw[dashed] (0.3,-1.2) -- (0.3,-0.6);

\draw (1.2,-.9) node {$w$};

\draw[dashed] (2.7,-1.2) -- (2.7,-0.6);
\draw (3.5,-.9) node {$s'$};

\draw (-2.7,0) -- (-2.7,0.6);
/
\draw (-0.3,0) -- (-0.3,0.6);
\draw (-1.25,0.3) node {$V$};
\draw (-0.15,0.3) node {$x$};
\draw (1.25,0.3) node {$V$};

\draw (0,0) -- (0,0.6);
\draw (0.15,0.27) node {$y$};
\draw (0.3,0) -- (0.3,0.6);
\draw (-5,0) node {2g:};
\end{tikzpicture}

\caption{Subcases of Case 2}\label{Case 2}\end{figure}

\noindent{\bf Case 2a}

Here $VxyV=2w3$. Note that $w=h(v)$, so $|w|\ge  12$. Therefore $|VxyV|\ge 14$, and $|V|\ge  6$. Since $v$ ends in $b$, $w$ ends in 132132, so that 1323 is a suffix of $VxyV$, hence of $V$. This forces 1323 to appear in the prefix $V$ of $VxyV$, hence in $w$. However, $1323$ does not appear as a factor of $w=h(v)$, giving a contradiction.
	
\noindent{\bf Cases 2b, 2c, 2e, 2f}

Case 2b is impossible; we have  $|s''|=1$, and $|s'|=|V|+1$.
This implies that $V$ begins with 2, but also with 3. 

Similarly, Case 2c requires that $V$ begins with 2, and also that $V$ begins with 1, by Condition~\ref{1's} on $s$. 

Case 2e implies that $V$ ends with both 2 and 3. 

Case 2f implies that $V$ ends with both 1 and 3.
	
\noindent{\bf Cases 2d, 2g}
	These cases both imply that $w$ is a factor of $s$. Because $w=h(v)$, and $|v|\ge  2$, this contradicts Condition~\ref{no h} on $s$.
		
\subsubsection*{Case 3: $VxyV=w''s'$, where $w''$ is a non-empty suffix of $w$, and $s'$ is a non-empty prefix of $s$.} This contradicts Condition~\ref{Vxy}. We conclude that $|w''|\ge 2$, so that 32 is a suffix of $w''$. 
Because of the restriction mentioned in Remark \ref{repetition_restriction} the possible subcases are given below.

	\begin{enumerate}[wide, labelwidth=!, labelindent=0pt]
	    \item[Case 3a:] 	    $|s'|=1$. 
	    \item[Case 3b:] 	    $|s'|=|V|+1$.
	    \item[Case 3c:]      $|s'|=|V|+2$.
	    \item[Case 3d:] 	    $|s'|=|V|+3$.
	\end{enumerate}

\begin{figure}

\begin{tikzpicture}
\path[use as bounding box] (-5,1) rectangle (5,-1.5);
\draw (-4,0) -- (4,0); %horizontal midline
\draw (-4,0.6) -- (4,0.6); %horizontal top line
\draw (-4,-0.6) -- (4,-0.6); %horizontal bottom line

\draw (2.4,0) -- (2.4,-0.6);
\draw (2.55,-0.4) node {3};
\draw (2.7,0.6) -- (2.7,-0.6);
\draw (2.85,-0.4) node {3};
\draw (3,0) -- (3,-0.6);
0.6);

\draw (1.2,-.9) node {$w''$};

\draw[dashed] (2.7,-1.2) -- (2.7,-0.6);
\draw (3.5,-.9) node {$s'$};

\draw (-2.7,0) -- (-2.7,0.6);

\draw (-0.3,0) -- (-0.3,0.6);
\draw (-1.25,0.3) node {$V$};
\draw (-0.15,0.3) node {$x$};
\draw (1.25,0.3) node {$V$};

\draw (0,0) -- (0,0.6);
\draw (0.15,0.27) node {$y$};
\draw (0.3,0) -- (0.3,0.6);
\draw (-5,0) node {3a:};
\end{tikzpicture}

\begin{tikzpicture}
\path[use as bounding box] (-5,1) rectangle (5,-1.5);
\draw (-4,0) -- (4,0); %horizontal midline
\draw (-4,0.6) -- (4,0.6); %horizontal top line
\draw (-4,-0.6) -- (4,-0.6); %horizontal bottom line

\draw (0,0) -- (0,-0.6);
\draw (.15,-0.4) node {3};
\draw (0.3,0.6) -- (0.3,-0.6);
\draw (.45,-0.4) node {3};
\draw (.6,0) -- (.6,-0.6);

\draw (-1.6,-.9) node {$w''$};
\draw[dashed] (0,-1.2) -- (0,-0.6);

\draw (1,-.9) node {$s'$};

\draw (-2.7,0) -- (-2.7,0.6);

\draw (-0.3,0) -- (-0.3,0.6);
\draw (-1.25,0.3) node {$V$};
\draw (-0.15,0.3) node {$x$};
\draw (1.25,0.3) node {$V$};

\draw (0,0) -- (0,0.6);
\draw (0.15,0.27) node {$y$};
\draw (0.3,0) -- (0.3,0.6);
\draw (-5,0) node {3b:};

\end{tikzpicture}

\begin{tikzpicture}
\path[use as bounding box] (-5,1) rectangle (5,-1.5);
\draw (-4,0) -- (4,0); %horizontal midline
\draw (-4,0.6) -- (4,0.6); %horizontal top line
\draw (-4,-0.6) -- (4,-0.6); %horizontal bottom line
\draw (2.7,0.6) -- (2.7,0);

\draw (-0.3,0) -- (-0.3,-0.6);
\draw (-.15,-0.4) node {3};
\draw (0,0.6) -- (0,-0.6);
\draw (.15,-0.4) node {3};
\draw (.3,0) -- (.3,-0.6);

\draw (-1.6,-.9) node {$w''$};

\draw[dashed] (-0.3,-1.2) -- (-0.3,-0.6);
\draw (1,-.9) node {$s'$};

\draw (-2.7,0) -- (-2.7,0.6);

\draw (-0.3,0) -- (-0.3,0.6);
\draw (-1.25,0.3) node {$V$};
\draw (-0.15,0.3) node {$x$};
\draw (1.25,0.3) node {$V$};

\draw (0,0) -- (0,0.6);
\draw (0.15,0.27) node {$y$};
\draw (0.3,0) -- (0.3,0.6);
\draw (-5,0) node {3c:};

\end{tikzpicture}

\begin{tikzpicture}
\path[use as bounding box] (-5,1) rectangle (5,-1.5);
\draw (-4,0) -- (4,0); %horizontal midline
\draw (-4,0.6) -- (4,0.6); %horizontal top line
\draw (-4,-0.6) -- (4,-0.6); %horizontal bottom line

\draw (2.7,0.6) -- (2.7,0);

\draw (-0.6,0) -- (-0.6,-0.6);
\draw (-.45,-0.4) node {3};
\draw (-0.3,0.6) -- (-0.3,-0.6);
\draw (-.15,-0.4) node {3};
\draw (0,0) -- (0,-0.6);

\draw (-1.6,-.9) node {$w$};

\draw[dashed] (-0.6,-1.2) -- (-0.6,-0.6);
\draw (1,-.9) node {$s'$};

\draw (-2.7,0) -- (-2.7,0.6);
/
\draw (-0.3,0) -- (-0.3,0.6);
\draw (-1.25,0.3) node {$V$};
\draw (-0.15,0.3) node {$x$};
\draw (1.25,0.3) node {$V$};

\draw (0,0) -- (0,0.6);
\draw (0.15,0.27) node {$y$};
\draw (0.3,0) -- (0.3,0.6);
\draw (-5,0) node {3d:};

\end{tikzpicture}

\caption{Subcases of Case 3}\label{Case 3}\end{figure}
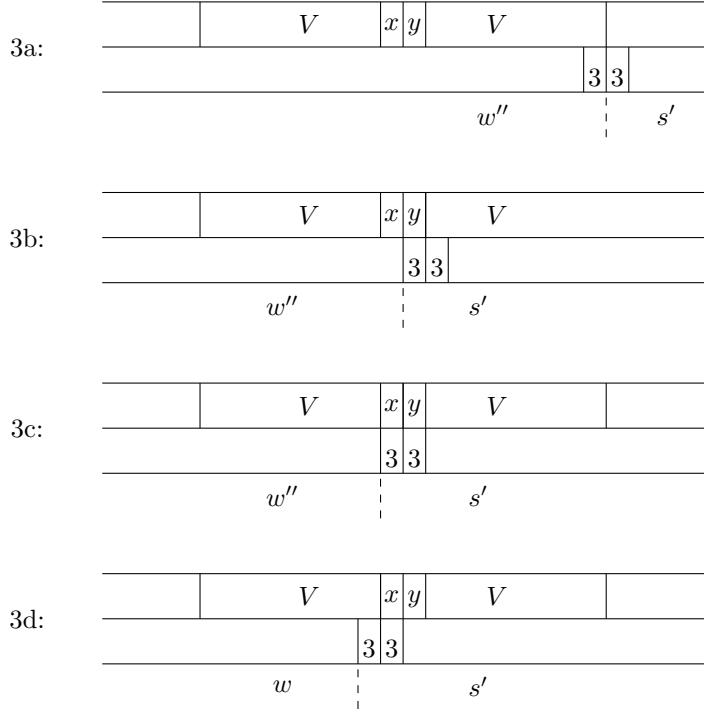

\subsubsection*{Case 3a: $|s'|=1$.} In this case, $s'=3$, $VxyV=w''3$. Since $|w''|\ge 2$, and $|VxyV|$ is even, we conclude that $|w''|\ge 3$. Since $w=h(v)$ has 132132 as a suffix, $w''$ has 132 as a suffix, and $VxyV$ ends in 1323. 

If $|V|=2$, then $VxyV=231323$, and $w$ has 23132 as a suffix, a contradiction. 

If $|V|\ge 4$, then 1323 is a suffix of $V$, which is a prefix of suffix $w''$ of $w$. However, $323$ does not appear in $w$, so Case 3a ends in contradiction.

\subsubsection*{Case 3b: $|s'|=|V|+1$}
	
Since $VxyV=w''s'$, $|w''|=|V|+1=|s'|$. The length 6 suffix of $w=h(v)$ is $132132$, and 331 is a prefix of $s$. Recall that $|V|$ is even. 

If $|V|=2$, then $VxyV$ is 132331, which is not a closed walk by Lemma~\ref{omega}. 

If $|V|=4$, then $Vx$ is the suffix 32132 of $w$. However, $yV=s'$, which has prefix 331, so that $V$ begins with 31, contradicting $Vx=32132$. 

If $|V|\ge  6$, then $V2$ is a suffix of $w$, so that $V=qh(u)13213$, where $q$ is the non-empty suffix of some block, and $u\in A^*$. Let $\hat{q}$ be obtained from $q$ by deleting its first letter. Then $\hat{q}h(u)13213$ is a prefix of $T$, contradicting Condition~\ref{prefix} on $s$.
	
\subsubsection*{Case 3c: $|s'|=|V|+2$.}
 In this case, $V=w''$. Also, the first letter of $V$ must be 1, the third letter of $s$. However, the length 2 suffix of $w$ is 32, and the length 4 suffix of $w$ is 2132. Neither of these begin with $1$, so $|V|\ge 6$. Therefore, because $s'=xyw''$, $T$ has $qh(u)13213$ as a prefix, where $q$ is the suffix of some block, and $u\in A^*$. This contradicts Condition~\ref{prefix} of $s$.
	
\subsubsection*{Case 3d: $|s'|=|V|+3$.}
 In this case, $w''3=V$. Since  331 is a prefix of $s$, $s'$ begins 331, and $xy=31$. In fact, $s'=3xyV=331w''3$.
 
 If $|V|=2$, then $|w''|=1$, so that $w''=2$ and $Vxy=2331$, However, this is not a closed walk by Lemma \ref{omega}. 
 
 If $|V|=4$, then $w''=132$, so that $Vxy=132331$, which is not a closed walk. 
 
 If $|V|=6$, then $w''=32132$, and $s'=331321323$, and $T$ has $13213$ as a prefix, contradicting Condition~\ref{prefix} on $s$.
 
 If $|V|\ge 8$, then $w'=qh(u)132132$, where $q$ is a non-empty suffix of a block, and $u\in A^*$. Then, $s'=331w'3$, and $T$ has as a prefix $1qh(u)13213$, contradicting Condition~\ref{prefix} on $s$.
	
\subsection*{Case 4: $VxyV=w''sw'$, where $w''$ is a non-empty suffix of $w$, and $w'$ is a non-empty prefix of $w$.} 
	
In this case, either the length 2 prefix or the length 2 suffix of $s$ is a factor of $V$, since only one of these can overlap $xy$. This gives a contradiction, as mentioned in Remark \ref{repetition_restriction}.
\end{proof}

\section{Computer search for the words $s$}

Table \ref{stable} gives a list of words $s$ that fulfill the conditions of Theorem~\ref{linking words}, found by computer search, along with the lengths of the images of these words under $f$. 

\small
\begin{table}
	\centering
	\begin{multicols}{2}
		\begin{tabular}{ |c|c| }
			\hline
		$s$	&$|f(s)|$\\ 
			\hline
331313123231212122&54\\
331313232321212122&55\\
331232323231212122&56\\
33131312121231212122&57\\
331323232321313122&58\\
33123232132121212122&59\\
33123123213231212122&60\\
33131323231231212122&61\\
33132323123231212122&62\\
33131323232312132122&63\\
33132323231232132122&64\\
33132323231313232122&65\\
3312321313123231212122&66\\
3312323232313121212122&67\\
3312323213232321212122&68\\
3312312323231232132122&69\\
3312323232313132132122&70\\
3312323232312132323122&71\\
331212323123213231212122&72\\
331212323131323231212122&73\\
331212323123213232312122&74\\
331212323232132321313122&75\\
331212323213132323232122&76\\
331212323232132323232122&77\\
331232313132323232313122&78\\
331313232323123232323122&79\\
33121212323232321323212122&80\\
			\hline
		\end{tabular}
		
		\begin{tabular}{ |c|c| }
			\hline
			$s$&$|f(s)|$\\ 
			\hline
33121212313132323232313122&81\\
33121213232323231313232122&82\\
33121213232323123232323122&83\\
33123123232321323232313122&84\\
33123123232323213232323122&85\\
3312121212323232312132323122&86\\
3312121232323232132321313122&87\\
3312121232323213132323232122&88\\
3312121232323232132323232122&89\\
3312123232313132323232313122&90\\
3312132323232132323232313122&91\\
3312323213232323123232323122&92\\
331212121232313132323232313122&93\\
331212121313232323123232323122&94\\
331212123232323213232132323122&95\\
331212132323232313123232323122&96\\
331212323123232323213232323122&97\\
331231232323232132323232313122&98\\
33121212123123232321323232313122&99\\
33121212123123232323213232323122&100\\
33121212312323232321323232313122&101\\
33121212323232313132323232313122&102\\
33121232312323232313132323232122&103\\
33121232323213232323123232323122&104\\
33123232132323232313123232323122&105\\
33123232323123232323213232323122&106\\
3312121212323213232323123232323122&107\\
			\hline
		\end{tabular}
	\end{multicols}
	\caption{Values for $s$}
	\label{stable}
\end{table}
\normalsize

Let $s$ be a word from Table \ref{stable}. One checks that $\Delta(f(s))$ ends in $ab$. Let $\psi=	\Delta(f(s))^{--}$, $\omega=\Delta(f(w))^{--}$. 
One checks that $\psi$ is level.
The circular word encoded by $f(sw)$ is
\begin{eqnarray*}
[\Delta(f(sw))^{--}]&=&[\Delta(f(s)f(w))^{--}]\\
	&=&\Delta(f(s))^{--}\Delta(f(w))^{--}\mbox{as per Remark~\ref{concatenation}}\\
	&=&\psi\omega. 
	\end{eqnarray*}
By Theorem~\ref{linking words}, $\psi\omega$ is a circular square-free word.
	
	Recall that $|\omega|_a=|\omega|_b=|\omega|_c$ by Corollary \ref{basic_squarefree_level}. Let $\alpha,\beta\in A$. We have
$$\begin{array}{ccccc}
|\psi|_\alpha-1&\le& |\psi|_\beta&\le& |\psi|_\alpha+1\\
	|\psi|_\alpha+|\omega|_\alpha-1&\le& |\psi|_\beta+|\omega|_\alpha&\le& |\psi|_\alpha+|\omega|_\alpha+1\\
	|\psi|_\alpha+|\omega|_\alpha-1&\le& |\psi|_\beta+|\omega|_\beta&\le& |\psi|_\alpha+|\omega|_\alpha+1\\
	|\psi\omega|_\alpha-1&\le& |\psi\omega|_\beta&\le& |\psi\omega|_\alpha+1
	\end{array}$$
	
	implying that $\psi\omega$ is level. 

\begin{corollary}\label{getLonger}
	Let $[v]$ be a circular square-free word over $A$ such that $v$ has prefix $a$ and suffix $b$. Let $w=h(v)$, and let $s$ be a word from Table \ref{stable}. Then $f(ws)$ encodes a level ternary circular square-free word of length $|f(w)|+|f(s)|$. 
\end{corollary}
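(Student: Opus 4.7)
The plan is to combine Theorem~\ref{linking words} (for square-freeness) with a short letter-counting argument based on Corollary~\ref{basic_squarefree_level} (for levelness). In fact, essentially all the ingredients have been prepared in the preceding sections, so the corollary assembles rather quickly.

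First, I would observe that Table~\ref{stable} was produced by a computer search designed to enforce the seven hypotheses of Theorem~\ref{linking words} on each listed $s$. Granting this, Theorem~\ref{linking words} yields that $f(ws)$ encodes a circular square-free word whenever $[v]$ is circular square-free over $A$ with prefix $a$ and suffix $b$ and $w=h(v)$. For the length claim, $f$ is a morphism, so $f(ws)=f(w)f(s)$, and a circular Pansiot encoding has the same length as the circular word it encodes; thus the encoded word has length $|f(w)|+|f(s)|$.

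For levelness, I would set $\psi=\Delta(f(s))^{--}$ and $\omega=\Delta(f(w))^{--}$. A direct inspection confirms that $\Delta(f(s))$ ends in $ab$ for every $s$ in Table~\ref{stable} (as also does every block image, by Table~\ref{block decodings}), so by Remark~\ref{concatenation} the encoded circular word is $[\psi\omega]$. By Corollary~\ref{basic_squarefree_level}, $|\omega|_a=|\omega|_b=|\omega|_c$. Separately, for each entry in Table~\ref{stable} I would verify that $\psi$ itself is level; this is a finite, mechanical check. Then for any $\alpha,\beta\in A$, adding the equality $|\omega|_\alpha=|\omega|_\beta$ to the inequalities $|\psi|_\alpha-1\le|\psi|_\beta\le|\psi|_\alpha+1$ gives
\[
|\psi\omega|_\alpha-1\le|\psi\omega|_\beta\le|\psi\omega|_\alpha+1,
\]
so $\psi\omega$ is level.

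The only obstacle is really the finite verification: that each of the 54 entries of Table~\ref{stable} satisfies the hypotheses of Theorem~\ref{linking words} and yields a level $\psi$. These are routine computations, performed once and for all by the computer search of the preceding section. Once granted, the corollary is immediate from Theorem~\ref{linking words} and Corollary~\ref{basic_squarefree_level}.
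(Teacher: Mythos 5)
Your proposal is correct and follows essentially the same route as the paper: square-freeness is delegated to Theorem~\ref{linking words} via the computer-verified hypotheses on the table entries, and levelness is obtained by writing the encoded word as $\psi\omega$ with $\psi=\Delta(f(s))^{--}$ level (checked) and $\omega=\Delta(f(w))^{--}$ perfectly balanced by Corollary~\ref{basic_squarefree_level}, exactly as in the paragraph preceding the corollary. No gaps.
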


Note that for any circular square-free word $[v]$ with $|v|\ge 2$, the first and last letters of $v$ are different, so that up to a permutation of the alphabet, we may assume that 
$v$ has prefix $a$ and suffix $b$. We therefore have the following corollary:

\begin{corollary}\label{18m+r}
Let $s$ be a word from Table \ref{stable}. There is a level ternary circular square-free word of length $18m+|f(s)|$, for any positive integer $m\ne 1,5,7,9,10,14,17.$ 
\end{corollary}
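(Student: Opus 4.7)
The plan is to combine Theorem~\ref{ternary} with Corollary~\ref{getLonger} in the obvious way, letting Corollary~\ref{getLonger} do all the hard work. The key observation is simply that the hypothesis of Corollary~\ref{getLonger} requires a circular square-free word $[v]$ over $A$ with a representative having prefix $a$ and suffix $b$, and such a representative exists whenever $[v]$ has length at least $2$.

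First, I would invoke Theorem~\ref{ternary} to produce a ternary circular square-free word $[v]$ of length $m$; this is permitted since the excluded values $\{5,7,9,10,14,17\}$ for Theorem~\ref{ternary} are among the excluded values in the corollary's hypothesis. Second, since $m\ne 1$ gives $m\ge 2$, no two cyclically adjacent letters of $v$ can be equal (or else $[v]$ would contain a length~$2$ square). Hence for any linear representative of $[v]$ its first and last letters are distinct, and after applying an appropriate permutation of $A$ (which preserves circular square-freeness) we obtain a representative, still called $v$, having prefix $a$ and suffix $b$.

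Third, set $w=h(v)$. Corollary~\ref{getLonger} then applies directly and yields that $f(ws)$ encodes a level ternary circular square-free word of length $|f(w)|+|f(s)|$. It only remains to compute $|f(w)|$. Since each block $h(x)$ has length $6$, one has $|h(v)|=6m$, and a direct check on the three blocks shows $|f(h(a))|=|f(h(b))|=2(2+3+4)=18$ and $|f(h(c))|=3(2+4)=18$. Therefore $|f(w)|=18m$, so the resulting level ternary circular square-free word has length $18m+|f(s)|$.

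There is no real obstacle here: all the substantive work has been carried out in Theorem~\ref{basicSquareFree}, Theorem~\ref{linking words}, and Corollary~\ref{getLonger}. The only minor point to note is the step of renaming letters to satisfy the prefix--suffix condition, which is harmless because the properties of being circular square-free and level are invariant under permutations of the alphabet.
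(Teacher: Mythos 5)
Your proposal is correct and follows essentially the same route as the paper: the paper likewise obtains $[v]$ of length $m$ from Theorem~\ref{ternary}, notes that for $|v|\ge 2$ the first and last letters of a representative differ so one may permute the alphabet to get prefix $a$ and suffix $b$, and then applies Corollary~\ref{getLonger} together with $|f(h(x))|=18$ for each block. Nothing is missing.
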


 \begin{theorem}\label{90}
Suppose $n$ is an integer, $n\ge 90$. There is  a level ternary circular square-free word of length $n$.
\end{theorem}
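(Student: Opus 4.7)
The plan is to deduce Theorem~\ref{90} directly from Corollary~\ref{18m+r} by noting that the column $|f(s)|$ of Table~\ref{stable} takes every integer value from $54$ to $107$ --- a block of $54$ consecutive integers, which is exactly $3 \cdot 18$. Writing $M$ for the set of positive integers not in $\{1,5,7,9,10,14,17\}$, the corollary supplies level ternary circular square-free words of every length in
$$\bigcup_{m \in M} \{18m + 54,\ 18m + 55,\ \ldots,\ 18m + 107\},$$
so the theorem reduces to showing that this union contains every integer $n \ge 90$.

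For each fixed $m$, the corresponding set is a block of $54$ consecutive integers starting at $18m + 54$ and ending at $18m + 107$. Thus if two successive elements $m < m'$ of $M$ satisfy $m' - m \le 3$, the two blocks are contiguous (possibly overlapping), because $18 \cdot 3 = 54$. I would then list the small elements of $M$ in order --- $2, 3, 4, 6, 8, 11, 12, 13, 15, 16, 18, 19, 20, \ldots$ --- and check that every consecutive gap is at most $3$; the worst gap is $11 - 8 = 3$, matching the bound exactly, and beyond $m = 17$ every positive integer belongs to $M$. Consequently the union starts at $18 \cdot 2 + 54 = 90$ and continues unbroken.

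The argument is essentially routine arithmetic, so there is no substantial combinatorial obstacle --- the hard work is already packaged inside Corollary~\ref{18m+r}. The one subtle point is a tight matching of two quantities: the worst gap in $M$ is exactly $3$, and this is precisely absorbed by the length-$54$ block of values of $|f(s)|$ provided by Table~\ref{stable}. Had the table supplied only $53$ consecutive values, the $m = 8 \to m = 11$ transition would leave a gap of one length uncovered. Once the elementary covering claim is verified, the theorem follows immediately by invoking Corollary~\ref{18m+r} for the appropriate $m \in M$ and the corresponding $s$ from Table~\ref{stable}.
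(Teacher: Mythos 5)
Your proposal is correct and is essentially the paper's own argument: both deduce the theorem from Corollary~\ref{18m+r} using the facts that $|f(s)|$ runs through all $54 = 3\cdot 18$ consecutive values from $54$ to $107$ and that the admissible $m$ have maximum gap $3$ (equivalently, the excluded set contains no three consecutive integers), with the union of blocks starting at $18\cdot 2 + 54 = 90$. The paper merely phrases the covering per~$n$ (choosing one of three consecutive candidate values of $m$) rather than as a union of contiguous blocks, which is the same arithmetic.
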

\begin{proof}
Note that Table~\ref{stable} gives words $s$ with $|f(s)|$ taking on values from $54=3(18)$ to $107=5(18)+17$, inclusive.

Suppose first that $90\le n\le 143$. Then $54\le n-36\le 107.$ Choose $s$ from Table~\ref{stable} with $|f(s)|= n-36$. The result follows from Corollary~\ref{18m+r} with $m=2$.

If $n\ge 144$, we can write $n$ in the form $n=18m+r$, $54\le r\le 107$, for three consecutive integers $m\ge 2$. Since the set $\{1,5,7,9,10,14,17\}$ does not contain three consecutive integers, a positive integer $m\notin \{1,5,7,9,10,14,17\}$ and $s$ from Table~\ref{stable} with $|f(s)|=r$, such that $n=18m+r$. The result follows by Corollary~\ref{18m+r}.
 \end{proof}
\section{Main theorem}

\begin{thm}[Main Theorem]
There is  a level ternary circular square-free word of length $n$,
for each positive integer $n$, $n\ne 5, 7, 9, 10, 14, 17$. \end{thm}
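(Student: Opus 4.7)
The plan is to combine the large-$n$ construction already obtained in Theorem~\ref{90} with a direct computer search for the remaining small lengths. Since Theorem~\ref{90} handles every $n\ge 90$, the Main Theorem reduces to exhibiting a level ternary circular square-free word of length $n$ for each $n$ with $1\le n\le 89$ and $n\notin\{5,7,9,10,14,17\}$.

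For these remaining lengths I would run a straightforward backtracking search. Being level forces the three letter counts to be determined by $n$ up to an adjustment of $\pm 1$: depending on $n\bmod 3$ there are either one or three admissible count profiles to try. Being circular square-free requires only that no factor of length at most $n$ appearing in $ww$ be a square, a condition that can be tested incrementally while a prefix is extended one letter at a time. Since any circular square-free word of length at least $2$ has distinct neighbouring letters, I may assume, up to a permutation of $A$, that the word begins with $ab$, shrinking the search space by a factor of six. A depth-first enumeration that prunes at the first forbidden repetition, or as soon as some letter count exceeds its allowed maximum, will finish each small length almost instantly.

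The main obstacle is not mathematical but organisational: the search must be carried out for every length in the interval $[1,89]$ outside the six exceptional values, and an explicit example must be recorded (or at least its existence verified) for each. Nonexistence at $n\in\{5,7,9,10,14,17\}$ is not in dispute, since by Theorem~\ref{ternary} no ternary circular square-free word of those lengths exists at all, so in particular no level one does; the backtracking search will simply return nothing at these values of $n$. Once the tabulation is complete, combining it with Theorem~\ref{90} establishes the Main Theorem.
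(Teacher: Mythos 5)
Your proposal is correct and matches the paper's proof: the paper likewise invokes Theorem~\ref{90} for all $n\ge 90$ and disposes of the lengths $n\le 89$ (outside the exceptional set) by an exhaustive computer search, recording explicit examples via their encodings in Table~\ref{short words}. The only cosmetic difference is that the paper tabulates the found words through their Pansiot-style encodings rather than as raw ternary strings.
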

\begin{proof}
Theorem~\ref{90} shows that there is a level ternary circular square-free word of length $n$,
for each integer $n\ge 90.$
An exhaustive computer search shows that a level ternary square-free word exists for each positive integer $n$, $n\le 89$, besides $n\ne 5, 7, 9, 10, 14, 17$. The circular words $[a]$, $[ab]$ and $[abc]$ give examples for $1\le n\le 3.$ We give encodings of words of the other lengths in Table~\ref{short words}. This establishes our main theorem.
\end{proof}

\small
\begin{table}
	\centering
	\begin{multicols}{2}
		\begin{tabular}{ |c|c| }
			\hline
			$|w|$ & encoding of $w$ \\ 
			\hline
			
			\small

$4$&$3$\\
$6$&$22$\\
$8$&$33$\\
$11$&$3121$\\
$12$&$3212$\\
$13$&$3132$\\
$15$&$121212$\\
$16$&$122122$\\
$18$&$312312$\\
$19$&$313123$\\
$20$&$331232$\\
$21$&$323232$\\
$22$&$13131212$\\
$23$&$13132122$\\
$24$&$32321212$\\
$25$&$33131321$\\
$26$&$31313232$\\
$27$&$1231212122$\\
$28$&$3132121212$\\
$29$&$3312312121$\\
$30$&$3131313212$\\
$31$&$3313131231$\\
$32$&$3232312132$\\
$33$&$131313121212$\\
$34$&$3232321323$\\
$35$&$131323212122$\\
$36$&$123232132122$\\
$37$&$331313232121$\\
$38$&$331232321321$\\
$39$&$13131321212122$\\
$40$&$33131321212121$\\
$41$&$33131313212121$\\
$42$&$33123132121321$\\
$43$&$33123131321321$\\
$44$&$33132323121321$\\
$45$&$1231313131212122$\\
$46$&$33131323232321$\\
$47$&$1313232321212122$\\
$48$&$3313131232312121$\\
$49$&$3313132323212121$\\

			\hline
		\end{tabular}
		
		\begin{tabular}{ |c|c| }
			\hline
			$|w|$& encoding of $w$ \\ 
			\hline

$50$&$3312323232312121$\\
$51$&$123232132121212122$\\
$52$&$3313232323213131$\\
$53$&$331232321321212121$\\
$54$&$331231232132312121$\\
$55$&$331313232312312121$\\
$56$&$331323231232312121$\\
$57$&$331313232323121321$\\
$58$&$331323232312321321$\\
$59$&$331323232313132321$\\
$60$&$12323213232321212122$\\
$61$&$33123232323131212121$\\
$62$&$33123232132323212121$\\
$63$&$33123123232312321321$\\
$64$&$33123232323131321321$\\
$65$&$33123232323121323231$\\
$66$&$3312123231232132312121$\\
$67$&$3312123231313232312121$\\
$68$&$3312123231232132323121$\\
$69$&$1212323232132323232122$\\
$70$&$3312123232131323232321$\\
$71$&$3312123232321323232321$\\
$72$&$3312323131323232323131$\\
$73$&$3313132323231232323231$\\
$74$&$331212123232323213232121$\\
$75$&$331212123131323232323131$\\
$76$&$331212132323232313132321$\\
$77$&$331212132323231232323231$\\
$78$&$331231232323213232323131$\\
$79$&$331231232323232132323231$\\
$80$&$33121212123232323121323231$\\
$81$&$12121232323232132323232122$\\
$82$&$33121212323232131323232321$\\
$83$&$33121212323232321323232321$\\
$84$&$33121232323131323232323131$\\
$85$&$33121323232321323232323131$\\
$86$&$33123232132323231232323231$\\
$87$&$3312121212323131323232323131$\\
$88$&$3312121213132323231232323231$\\
$89$&$3312121232323232132321323231$\\
$90$&$3312121323232323131232323231$\\

			\hline
		\end{tabular}
	\end{multicols}
	\caption{Encodings of short level ternary circular square-free words $w$}
	\label{short words}
\end{table}
\normalsize

\end{document}